\newtheorem{thm}{Theorem}[section]
\newtheorem{cor}[thm]{Corollary}
\newtheorem{lem}[thm]{Lemma}
\newtheorem{ques}[thm]{Question}
\numberwithin{equation}{section}
\date{}
\newcommand{\ubox}{\overline{\text{dim}}_\text{B}}
\newcommand{\lbox}{\underline{\text{dim}}_\text{B}}
\newcommand{\boxd}{\text{dim}_\text{B}}
\newcommand{\aso}{{\text{dim}}_\text{A}}
\newcommand{\qaso}{{\text{dim}}_\text{qA}}
\newcommand{\low}{{\text{dim}}_\text{L}}
\newcommand{\haus}{{\text{dim}}_\text{H}}
\newcommand{\asospec}{{\text{dim}}^{\theta}_\text{A}}
\newcommand{\lset}{L(\Gamma)}
\newcommand{\mups}{\mu_\delta}
\newcommand{\kmin}{\mathbf{k}_{\min}}
\newcommand{\kmax}{\mathbf{k}_{\max}}
\newcommand{\hdist}[2]{d_\mathbb{H}(#1, #2)}
\newcommand{\size}[1]{\vert #1 \vert}
\newcommand{\boldzero}{\mathbf{0}}
\renewcommand{\epsilon}{\varepsilon}
\renewcommand{\geq}{\geqslant}
\renewcommand{\leq}{\leqslant}
\definecolor{lightgray}{rgb}{0.83, 0.83, 0.83}
\title{Refined horoball counting and conformal measure for \\ Kleinian group actions}
\author{Jonathan M. Fraser and Liam Stuart\\ \\
The University of St Andrews, Scotland\\
E-mails: jmf32@st-andrews.ac.uk \& ls220@st-andrews.ac.uk}
\pgfplotsset{compat=1.17}
\begin{document}
\maketitle

\begin{abstract}
Parabolic fixed points form a countable dense subset of the limit set of a non-elementary geometrically finite Kleinian group with at least one parabolic element.  Given such a group, one may associate a standard set of pairwise disjoint horoballs, each tangent to the boundary at a parabolic fixed point.  The diameter  of such a  horoball can be thought of as the `inverse cost' of approximating an arbitrary point in the limit set by the associated   parabolic point. A result of Stratmann and Velani allows one to count horoballs of a given size and, roughly speaking, for small $r>0$ there are $r^{-\delta}$ many horoballs of size approximately $r$, where $\delta$ is the Poincar\'e exponent of the group.  We investigate localisations of this result, where we seek to count horoballs of size approximately $r$ inside a given ball $B(z,R)$.  Roughly speaking, if $r \lesssim R^2$, then we obtain an analogue of the Stratmann-Velani result (normalised by the Patterson-Sullivan measure of $B(z,R)$).  However, for larger values of $r$, the count depends in a subtle way on $z$.

Our counting results have several applications, especially to the geometry of conformal measures supported on the limit set.  For example, we compute or estimate several `fractal dimensions' of certain $s$-conformal measures for $s>\delta$ and use this to examine continuity properties of $s$-conformal measures at $s=\delta$. 
\\ \\ 
\emph{Mathematics Subject Classification} 2020: primary: 30F40, 28A80; secondary: 11J83.
\\
\emph{Key words and phrases}: Kleinian group, parabolic fixed point, Patterson-Sullivan measure, conformal measure, horoballs,  global measure formula, Assouad spectrum, box dimension, Diophantine approximation.
\end{abstract}

\section{Introduction}

\subsection{Kleinian groups, parabolic points and horoballs}

Let $\Gamma$ denote a non-elementary geometrically finite Kleinian group acting on the Poincar\'e ball model of hyperbolic geometry, which models $(d+1)$-dimensional hyperbolic space by $\mathbb{D}^{d+1} = \{z \in \mathbb{R}^{d+1} \mid \size{z} < 1\}$ equipped with the hyperbolic metric $d_{\mathbb{H}}$ defined by
\[ds = \frac{2\size{dz}}{1-\size{z}^2}.\]
We write $\mathbb{S}^d = \{z \in \mathbb{R}^{d+1} \mid \size{z} = 1\}$ to denote the \textit{boundary at infinity} of the space $(\mathbb{D}^{d+1}, d_{\mathbb{H}})$, and letting $\mathbf{0} = (0,\dots, 0) \in \mathbb{D}^{d+1}$, we write $\lset = \overline{\Gamma(\mathbf{0})} \setminus \Gamma(\mathbf{0})$ to denote the limit set of $\Gamma$. We will also make use of the upper half-space model $\mathbb{H}^{d+1} = \mathbb{R}^d \times (0,\infty)$ equipped with the analogous metric, noting that we can move between these models by applying a M\"obius transformation (the Cayley transformation). For more background  on hyperbolic geometry, see \cite{anderson, beardon}.

We will assume throughout that $\Gamma$ contains parabolic elements i.e. maps which fix precisely one point in $\mathbb{S}^d$, and write $P$ for the countable set of parabolic fixed points. It is known (see \cite{stratmannvelani, stratmannurbanski}) that we can associate  a standard set of horoballs (Euclidean balls with interior contained in $\mathbb{D}^{d+1}$ which are tangent to the boundary at a parabolic fixed point) $\{H_p\}_{p \in P}$ such that they are pairwise disjoint, do not contain $\mathbf{0}$, and given any $p \in P$ and $g \in \Gamma$, we have $g(H_p) = H_{g(p)}$. 
Roughly speaking, a Kleinian group $\Gamma$ is \textit{geometrically finite} if it has a fundamental domain with finitely many sides (we refer the reader to \cite{bowditch} for the precise definition).  Write 
\[\delta = \inf \left\{s>0 \mid \sum\limits_{g \in \Gamma} e^{-s\hdist{\boldzero}{g(\boldzero)}} < \infty \right\}\]
to denote the \textit{Poincar\'e exponent} of $\Gamma$. This exponent $\delta$ turns out to be closely related to the dimension theory of $\lset$ and associated measures.  For example, both the Hausdorff and box dimension of $\lset$ are given by $\delta$ in the geometrically finite setting.

\subsection{Notation}

Throughout, we write $A \lesssim B$ if there exists a constant $C>0$ such that $A \leq CB$, and $A \gtrsim B$ if $B \lesssim A$. We write $A \approx B$ if both $A \lesssim B$ and $A \gtrsim B$. The implicit  constants are uniform but may depend on parameters which are fixed throughout the paper, for example the group $\Gamma$.  If we use this notation in a situation where the implicit constants do depend on something more than  $\Gamma$, we will emphasise this explicitly.  For example, if the implicit constant $C$ in $A \lesssim B$ depends on an additional parameter $\alpha$, then we will write $A \lesssim_\alpha B$.

We write $B(z,r)$ to denote the closed (Euclidean) ball centred at $z$ with radius $r>0$.   We write $|X|$ to denote the (Euclidean)  diameter of a non-empty set $X$.  This is not to be confused with $|z|$ which denotes the absolute value of a point $z \in \mathbb{R}^d$.

\subsection{Counting horoballs}

We are interested in counting horoballs of a given size. For example, given a geometrically finite Kleinian group $\Gamma$ and $r>0$, we ask: how many horoballs of diameter approximately $r$ should one expect to see? Stratmann and Velani \cite[Theorem 3]{stratmannvelani} proved the following.
\begin{thm}\label{globhoro}
There exists $\tau \in(0,1)$ such that, for all sufficiently large $k \in \mathbb{N}$, 
\[   \#\left\{p \in P \mid \tau^{k+1} \leq \size{H_p} < \tau^k \right\} \approx \tau^{-k\delta}.\]
\end{thm}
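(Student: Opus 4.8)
\emph{Proof strategy.} The plan is to pass to the Patterson--Sullivan measure $\mu$ (the $\delta$-conformal measure on $L(\Gamma)$) and to count horoballs through the $\mu$-masses of their shadows. Two geometric facts do the work. (i) \emph{Disjoint standard horoballs of comparable size have well-separated tangent points}: writing $c_p=(1-|H_p|/2)p$ for the Euclidean centre of $H_p$ (so $|p|=1$ and the radius is $|H_p|/2$), one computes $|c_p-c_q|^2=\big(\tfrac{|H_p|-|H_q|}{2}\big)^2+\big(1-\tfrac{|H_p|}{2}\big)\big(1-\tfrac{|H_q|}{2}\big)|p-q|^2$, and disjointness $|c_p-c_q|\ge\tfrac{|H_p|+|H_q|}{2}$ forces $|p-q|\ge\sqrt{|H_p||H_q|}$; hence if $\tau^{k+1}\le|H_p|,|H_q|<\tau^k$ and $p\ne q$ then $|p-q|\ge\tau^{k+1}$, so the balls $B(p,\tau^{k+1}/3)$ over such $p$ are pairwise disjoint. (ii) \emph{Shadow estimate}: $\mu(B(p,|H_p|))\approx|H_p|^\delta$ for every $p\in P$; this is the Stratmann--Velani global measure formula applied at the point of the geodesic ray from $\mathbf{0}$ towards $p$ at hyperbolic distance $-\log|H_p|+O(1)$ from $\mathbf{0}$, which lies within bounded distance of the Euclidean top $(1-|H_p|)p$ of $H_p$ on the boundary of the cusp, so that the cusp-depth correction term is $O(1)$, uniformly over the finitely many cusps.

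Granting (i) and (ii), the \emph{upper bound} is immediate: since $\mu$ is a finite measure on $\mathbb{S}^d$,
\[\#\{p\in P:\tau^{k+1}\le|H_p|<\tau^k\}\cdot\tau^{k\delta}\ \lesssim\ \sum_{p:\,\tau^{k+1}\le|H_p|<\tau^k}\mu\big(B(p,\tau^{k+1}/3)\big)\ \le\ \mu(\mathbb{S}^d)\ <\ \infty,\]
and the count is $\lesssim\tau^{-k\delta}$.

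For the \emph{lower bound} one must exhibit enough horoballs, which is the substantive part. By (ii) it suffices to show that for all large $k$ the enlarged shadows $\{B(p,C\tau^k):\tau^{k+1}\le|H_p|<\tau^k\}$ cover a definite $\mu$-proportion of $L(\Gamma)$: since $\mu(B(p,C\tau^k))\approx\tau^{k\delta}$ for such $p$ (again by the global measure formula), this yields $\#\{p:\tau^{k+1}\le|H_p|<\tau^k\}\cdot\tau^{k\delta}\gtrsim\mu(\bigcup_p B(p,C\tau^k))\gtrsim1$, i.e. the count is $\gtrsim\tau^{-k\delta}$. Now a geodesic ray from $\mathbf{0}$ that at hyperbolic time $t$ lies in the thick part of the convex core is within bounded distance of the Euclidean top of a horoball of diameter $\approx e^{-t}$ (a routine consequence of the self-similar arrangement of the horoball packing near the finitely many cusps), so its endpoint lies in $B(p,Ce^{-t})$ for that horoball. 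Hence it is enough to show that a definite $\mu$-proportion of the geodesic rays from $\mathbf{0}$ lie in the thick part at time $t:=k\log(1/\tau)$, \emph{uniformly in $k$}. This is where geometric finiteness enters decisively: the Bowen--Margulis--Sullivan measure on $T^1(\mathbb{D}^{d+1}/\Gamma)$ is finite and assigns positive mass to the thick part, and the expanding spheres (the images, under the time-$t$ geodesic flow, of the unit sphere at $\mathbf{0}$ carrying $\mu$) equidistribute towards the normalised Bowen--Margulis--Sullivan measure, which supplies the required uniform lower bound for all large $t$. One should choose $\tau$ small (i.e. $-\log\tau$ larger than the various implicit $O(1)$ constants above) so that the correspondence between thick-part visits at time $t$ and shadows of horoballs of diameter $\approx e^{-t}$ is finite-to-one in both directions.

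The main obstacle is precisely this last uniformity over scales. Its \emph{subsequential} version --- that the count is $\gtrsim\tau^{-k\delta}$ for infinitely many $k$ --- is cheap, following from divergence of the Poincar\'e series $\sum_{g\in\Gamma}e^{-\delta\,d_{\mathbb{H}}(\mathbf{0},g(\mathbf{0}))}$ at $s=\delta$ (valid since $\Gamma$ is geometrically finite), but on its own it does not prove the theorem. An alternative route carrying the same core difficulty would first decompose $P$ into the $\Gamma$-orbits of the finitely many primitive parabolic fixed points $p_1,\dots,p_\ell$ (with stabilisers of ranks $k_1,\dots,k_\ell$), observe that the horoballs in $\Gamma(H_{p_i})$ are indexed by the cosets $\Gamma/\Gamma_{p_i}$ with $|g(H_{p_i})|\approx e^{-d_{\mathbb{H}}(\mathbf{0},g_0(\mathbf{0}))}$ for the minimal-displacement representative $g_0$ of the coset, and thereby reduce counting horoballs of diameter $\approx e^{-t}$ to counting the $\Gamma$-orbit points at hyperbolic distance $\approx t$ from $\mathbf{0}$ that lie within $O(1)$ of the top of a horoball; one would then invoke the sharp orbit-counting asymptotic $\#(\Gamma(\mathbf{0})\cap B_{\mathbb{H}}(\mathbf{0},t))\approx e^{\delta t}$ together with the (equivalent) fact that this selection has positive density. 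The remaining ingredients --- the Euclidean geometry of tangent horoballs, the global measure formula, and the coset bookkeeping --- are routine.
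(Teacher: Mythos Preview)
The paper does not prove Theorem~\ref{globhoro}; it quotes it from Stratmann--Velani. However, the Stratmann--Velani argument is visible in the paper's proof of the localised Theorem~\ref{localvelani}, so one can compare against that. Your upper bound is correct and is essentially the same idea (disjoint shadows, global measure formula). The divergence is entirely in the lower bound.

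There is a genuine gap in your lower bound. You assert that a geodesic ray from $\mathbf{0}$ which at hyperbolic time $t$ lies in the thick part ``is within bounded distance of the Euclidean top of a horoball of diameter $\approx e^{-t}$'', calling this routine. It is not, and as stated it is false. Being in the thick part at time $t$ means only that $z_t$ lies outside every standard horoball; it says nothing about the \emph{size} of the horoball whose boundary is nearby. Indeed, if $z_t$ sits on $\partial H_p$ at Euclidean height $\approx e^{-t}$, then $|z-p|\approx\sqrt{e^{-t}|H_p|}$, and $|H_p|$ may be any value $\geq e^{-t}$; there is no mechanism forcing $|H_p|\approx e^{-t}$. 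Equidistribution of the expanding sphere towards the Bowen--Margulis--Sullivan measure only sees the \emph{quotient}, where all horoballs in a cusp class are identified, so it cannot by itself isolate a particular diameter scale among the lifts. Your alternative coset/orbit-counting route has the same defect: orbit points at distance $\approx t$ from $\mathbf{0}$ need not be near the tops of horoballs, and selecting those that are is precisely the statement to be proved.

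The Stratmann--Velani argument (and the paper's adaptation in Section~\ref{prooflocalvelani}) bypasses this entirely. From the Dirichlet-type covering $L(\Gamma)\subseteq\bigcup_{|H_p|\ge r}\Pi(\kappa\sqrt{r/|H_p|}\,H_p)$ with bounded multiplicity, together with $\mu_\delta(\Pi(\lambda_pH_p))\approx r^\delta(|H_p|/r)^{\mathbf{k}(p)/2}$, one gets
\[
1\ \approx\ r^\delta\sum_{|H_p|\ge r}\left(\frac{|H_p|}{r}\right)^{\mathbf{k}(p)/2}.
\]
Writing this at scales $r$ and $\alpha r$ and subtracting, using $\delta>\kmax/2$, yields
\[
\sum_{r\le|H_p|<\alpha r}\left(\frac{|H_p|}{r}\right)^{\mathbf{k}(p)/2}\ \gtrsim\ r^{-\delta},
\]
and since each summand is $\approx_\alpha 1$ one obtains the lower count directly. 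The crucial input you are missing is the differencing step exploiting $\delta>\kmax/2$; this is what isolates horoballs of a prescribed scale without any appeal to mixing or equidistribution.
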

So, from a global point of view, we should expect to see roughly $r^{-\delta}$ horoballs of diameter approximately $r$, provided $r$ is sufficiently small. Our main interest is in refining this result to provide local information.  That is, given $z \in \lset$ and $R>0$, we ask: how many horoballs of diameter approximately $r$ should we expect to find in the ball  $B(z,R)$? As we will see, the answer to this question will depend on the relationship between $r$ and $R$, as well as the proximity of the point $z$ to horoballs of large diameter.

\subsection{The Patterson-Sullivan measure $\mu_\delta$ and the global measure formula}

The limit set  of $\Gamma$ supports a $\Gamma$-ergodic conformal measure   of maximal Hausdorff dimension known as the \emph{Patterson-Sullivan measure}.  This measure was first constructed by Patterson \cite{pattersonlimit} in the Fuchsian case (i.e. the case when $d=1$), and later generalised by Sullivan \cite{sullivandensity} to higher dimensions. Technically speaking, there are a whole family of Patterson-Sullivan measures (see \cite{pattersonclassic}), but as much of the theory for each measure is the same, we simply fix one and discuss \textit{the} Patterson-Sullivan measure.  We denote the Patterson-Sullivan measure by $\mu_\delta$ with   $\delta$ referring to the Poincar\'e exponent of $\Gamma$ (which is also the Hausdorff dimension of $\lset$ and of $\mu_\delta$).

Since we are assuming that $\Gamma$ contains parabolic elements, the Patterson-Sullivan measure does not obey a simple power law (as in, for example, the convex co-compact case), and instead exhibits `parabolic fluctuation'. Stratmann and Velani \cite[Theorem 2]{stratmannvelani} established a \emph{global measure formula} for the Patterson-Sullivan measure $\mu_\delta$, which quantifies this parabolic fluctuation, see  \eqref{GlobThm}.   This is a formula which gives the measure of an arbitrary ball up to uniform constants and has a host of useful applications, for example concerning dimension theory. We recall the statement here, for which we require some notation.  Given $p \in P$, we write $\mathbf{k}(p)$ to denote  the maximal rank of a free abelian subgroup of the stabiliser of $p$, denoted by $\text{Stab}(p) \leq \Gamma$.  In particular, this abelian subgroup    must be generated by $\mathbf{k}(p)$ parabolic elements, noting that  $\text{Stab}(p)$ cannot contain any loxodromic elements, as this would violate discreteness of  $\Gamma$.  It is easy to see that $1 \leq \mathbf{k}(p) \leq d$ for all $p \in P$, and   we define 
\begin{align*}
\kmin &= \min\{\mathbf{k}(p) \mid p \in P\},\\
\kmax &= \max\{\mathbf{k}(p) \mid p \in P\}.
\end{align*}
Let $z \in \lset$ and $T>0$, and define $z_T \in \mathbb{D}^{d+1}$ to be the point on the geodesic ray joining $\mathbf{0}$ and $z$ which is hyperbolic distance $T$ from $\mathbf{0}$. The global measure formula states that 
\begin{equation}\label{GlobThm}
\mups(B(z,e^{-T})) \approx  e^{-T\delta}e^{-\rho(z,T)(\delta-\mathbf{k}(z,T))}
\end{equation}
where $\mathbf{k}(z,T) = \mathbf{k}(p)$ if $z_T \in H_p$ for some $p \in P$ and 0 otherwise, and \[\rho(z,T) = \inf\{\hdist{z_T}{y} \mid y \notin H_p \}\] if $z_T \in H_p$ for some $p \in P$ and 0 otherwise. 
 
\subsection{The $s$-conformal measures $\mu_s$}

An important feature of the Patterson-Sullivan measure is that it is   a \textit{conformal measure} for the action of $\Gamma$. For $s>0$, we say that a Borel probability measure $\mu$ is \textit{s-conformal} for $\Gamma$ if for all  $g \in \Gamma$ and for all Borel measurable $A \subseteq \mathbb{S}^d$,
\[\mu(g(A)) = \int\limits_{A} |g'|^s \rm d \mu. \]
In the case of the Patterson-Sullivan measure, we have $s = \delta$. However, Sullivan \cite{sullivanrelated} was also able to show that, provided $\Gamma$ contains parabolic elements, given any $s>\delta$, there exists an $s$-conformal measure $\mu_s$ supported on $\lset$. These measures exhibit substantially different behaviour than the Patterson-Sullivan measure. For example, \cite[Corollary 20]{sullivandensity} states that these measures must be purely atomic, with atoms at the parabolic fixed points of $\lset$. An important fact which we will use throughout is that given $p \in P$, $\mu_s(\{p\})$ is uniformly comparable to $\size{H_p}^s$ (see \cite[Lemma 3.4]{stratmannvelani}). 

We use our refined horoball counting results to study the geometry of the conformal measures $\mu_s$, for example, proving a `global measure formula', see Theorem \ref{Global2}.  We are especially interested in `continuity properties' of $\mu_s$ at $s=\delta$.   Clearly, $\mu_s$ is \emph{not} continuous at $s=\delta$ in any reasonable topological sense.  For example, $\mu_s$ cannot converge weakly to $\mu_\delta$ as $s \to \delta$ since $\liminf_{s\to \delta} \mu_s(\{p\}) >0 = \mu_\delta(\{p\})$ for all parabolic points $p \in P$.  Therefore, we will probe continuity of $\mu_s$ at $s=\delta$ in a weaker and  more aesthetic sense, see Corollaries \ref{cty1} and \ref{cty2}.

\section{Results and applications}\label{results}
We split this section into several parts, the first part pertaining to our main counting results for horoballs.  Following this we provide three application sections concerning Diophantine approximation, global measure formulae and dimension theory, respectively.

\subsection{Main results: refined horoball  counting}
Our first  result is the following localisation of Theorem \ref{globhoro}.  This result establishes that an appropriately normalised analogue of Stratmann and Velani's global counting result works locally provided one is counting horoballs of size roughly less than the square of the scale of the localisation.
\begin{thm}\label{localvelani}
For all sufficiently small $\tau \in(0,1)$ there exists $C \in (0,1)$ such that for all  $z \in \lset$, all sufficiently small $R>0$ and  all $k \in \mathbb{N}$ such that $\tau^k < CR^{2}$, we have
\[\# \left\{p \in P \cap B(z,R) \mid \tau^{k+1} \leq \size{H_p} < \tau^k \right\} \approx_\tau \tau^{-k\delta} \mups(B(z,R)).\] 
\end{thm}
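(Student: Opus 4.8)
The plan is to transfer the global count of Theorem \ref{globhoro} into the ball $B(z,R)$ by pulling back via a suitable group element, using the conformality of $\mu_\delta$ and the equivariance $g(H_p)=H_{g(p)}$ of the horoball system. First I would fix $z\in\lset$ and small $R>0$ and choose a group element $g\in\Gamma$ whose action roughly maps a ball of unit-order scale onto $B(z,R)$; concretely, tracking the geodesic ray to $z$ and using the standard `shadow'/Sullivan shadow lemma machinery, one finds $g$ with $|g'|\approx R$ on a neighbourhood of $g^{-1}(B(z,R))$, so that $g^{-1}(B(z,R))$ has diameter $\approx 1$ and $\mu_\delta(B(z,R))\approx |g'|^\delta \approx R^\delta$ — wait, this is exactly where the parabolic fluctuation enters, so more carefully $\mu_\delta(B(z,R))\approx |g'|^\delta$ only up to the correction factor in the global measure formula \eqref{GlobThm}; I would keep this factor as an abstract quantity $M:=\mu_\delta(B(z,R))$ and aim to prove the count is $\approx_\tau \tau^{-k\delta} M$.

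Next, the bounded distortion of $g$ on the relevant region gives $|H_{g(p)}| = |g(H_p)| \approx |g'|\,|H_p|$ for every parabolic point $p$ with $p\in g^{-1}(B(z,R))$ and $|H_p|$ not too large (say $|H_p|\lesssim 1$, which holds automatically for all but finitely many $p$). Hence the horoballs of size $\approx \tau^k$ sitting inside $B(z,R)$ correspond, under $g^{-1}$, to horoballs of size $\approx \tau^k/|g'| \approx \tau^k/R$ lying in the fixed-size region $g^{-1}(B(z,R))$. This is where the hypothesis $\tau^k < CR^2$ is used: it guarantees $\tau^k/R < CR$, i.e. the pulled-back horoballs are still small compared to the unit-scale region, so they are genuinely `interior' and the distortion estimates are uniform; if $\tau^k$ were comparable to or larger than $R^2$, the pulled-back scale $\tau^k/R$ would not be small relative to $R$ and a single horoball near $z$ could dominate, breaking the clean count (this subtlety is precisely the content of the later, more delicate results alluded to in the introduction).

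Then I would invoke Theorem \ref{globhoro} — or rather a mild strengthening of it asserting that the count $\#\{p\in P : \tau^{m+1}\le |H_p| < \tau^m\}$ localised to any fixed unit-size region near the limit set is still $\approx \tau^{-m\delta}$ times the $\mu_\delta$-measure of that region — to count the pulled-back horoballs, with $m$ chosen so that $\tau^m \approx \tau^k/R$. This yields a count of $\approx \tau^{-m\delta} = \tau^{-k\delta} R^\delta$, and then I would convert $R^\delta$ back to $M=\mu_\delta(B(z,R))$ using that, in the regime $\tau^k<CR^2$, one has $R^\delta \approx \mu_\delta(B(z,R))$ up to the bounded parabolic-fluctuation factor, or more robustly run the whole comparison through $\mu_\delta$-measures directly via conformality: $\mu_\delta(g^{-1}(B(z,R)))\approx 1$ and the subregion counts scale by the same conformal factor, so the ratio is preserved. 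The main obstacle I anticipate is making the choice of $g$ and the bounded-distortion region precise uniformly in $z$ — in particular handling the case where $z$ itself lies deep inside a large horoball, where the geodesic ray spends a long excursion and $|g'|$ oscillates — and checking that the `extra' parabolic points introduced or removed by passing between $B(z,R)$ and a comparable shadow of $g$ contribute only a bounded multiplicative error. I would handle this by working with shadows of balls in the domain of discontinuity (à la Stratmann–Velani) rather than Euclidean balls directly, and by absorbing the finitely many exceptional large horoballs into the implied constants, which are allowed to depend on $\tau$ and $\Gamma$.
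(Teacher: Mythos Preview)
Your pullback strategy has a genuine gap at precisely the points where the result is delicate. You want $g\in\Gamma$ with $|g'|\approx R$ on a neighbourhood of $g^{-1}(B(z,R))$, so that the preimage has diameter $\approx 1$; but when $z_{-\log R}$ lies deep inside a standard horoball $H_{p_0}$ no such $g$ exists, because the orbit $\Gamma(\mathbf 0)$ avoids the interiors of the standard horoballs. The nearest orbit point to $z_{-\log R}$ then sits on $\partial H_{p_0}$ at hyperbolic distance $\rho(z,-\log R)$, which is unbounded. Taking $g$ at the entry point of the geodesic into $H_{p_0}$ gives $|g'|\approx |H_{p_0}|\gg R$ and a preimage of diameter $\approx R/|H_{p_0}|\ll 1$; any other choice gives a region of the wrong shape. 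Either way you are thrown back to counting horoballs in a genuinely sub-unit ball, which is the statement you are proving. Your proposed ``mild strengthening'' of Theorem~\ref{globhoro} to a fixed unit-size region, with the count weighted by the $\mu_\delta$-measure of that region, is circular for the same reason: a unit-size ball need not have $\mu_\delta$-measure $\approx 1$, and asserting the weighted count there is exactly Theorem~\ref{localvelani} at another scale. You flag this parabolic case as the main obstacle but the suggested fix (``work with shadows'') does not resolve it, since shadows of orbit points simply do not exist at the intermediate scales inside a cusp.

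The paper bypasses the pullback entirely. It uses the Stratmann--Velani squeezed-horoball cover \cite[Theorem~1]{stratmannvelani}: for every small $r$ the limit set is covered, with bounded multiplicity, by the projections $\Pi\big(\kappa\sqrt{r/|H_p|}\,H_p\big)$ over $|H_p|\ge r$, and each piece has known $\mu_\delta$-measure $\approx r^\delta(|H_p|/r)^{\mathbf k(p)/2}$. Restricting this cover to $B(z,R)$ yields
\[
\mu_\delta(B(z,R))\ \approx\ r^\delta\sum_{\substack{p\in P\cap B(z,R)\\ r\le |H_p|<R}}\Big(\tfrac{|H_p|}{r}\Big)^{\mathbf k(p)/2},
\]
where the hypothesis $r<CR^2$ is used only to show that the $\lesssim 1$ boundary pieces of diameter $\approx\sqrt r\ll R$ carry negligible $\mu_\delta$-mass (via $\low\mu_\delta>0$). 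Writing this identity at two scales $r$ and $\alpha r$ and subtracting, then using $\delta>\mathbf k_{\max}/2$, isolates the count in the shell $[r,\alpha r)$; taking $\alpha=1/\tau$ finishes. No group element, no bounded-distortion window, and no case split on the parabolic depth of $z$ is required.
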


We defer the proof of Theorem \ref{localvelani} until Section \ref{prooflocalvelani}. This theorem demands further scrutiny of the case when the horoballs are large compared to the scale of the localisation.  First let us observe that we always have a theoretical upper bound  for the local horoball counts, consistent with the precise value obtained in Theorem \ref{localvelani}.

\begin{thm}\label{theoretical}
Let $\tau \in(0,1)$,    $z \in \lset$,  and  $R \in (0,1)$.  If   $k \in \mathbb{N}$ is such that $\tau^{k} \lesssim R$, then
\[\# \left\{p \in P \cap B(z,R) \mid \tau^{k+1} \leq \size{H_p} < \tau^k \right\} \lesssim_\tau  \tau^{-k\delta} \mups(B(z,R)).\] 
Moreover, if  $k \in \mathbb{N}$ is such that $\tau^{k+1} > 2R$, then
\[\# \left\{p \in P \cap B(z,R) \mid \tau^{k+1} \leq \size{H_p} < \tau^k \right\} \leq 1.\] 
\end{thm}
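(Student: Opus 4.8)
The plan is to handle the two assertions separately, with the second being essentially a packing/disjointness observation and the first following from the global measure formula combined with the near-disjointness of horoballs at a common scale. Let me think about what each requires.

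For the second assertion: suppose $\tau^{k+1} > 2R$ and that two parabolic points $p, q \in P \cap B(z,R)$ both satisfy $\tau^{k+1} \le \size{H_p}, \size{H_q} < \tau^k$. Since $p, q \in B(z,R)$ we have $\size{p-q} \le 2R < \tau^{k+1} \le \min\{\size{H_p},\size{H_q}\}$. So each of the two horoballs, which is a Euclidean ball of diameter at least $\tau^{k+1}$ tangent to $\mathbb{S}^d$ at its respective parabolic point, must contain both $p$ and $q$ on its boundary sphere — or rather, the two base points are closer together than the diameter of either horoball. A short geometric argument (two mutually tangent-to-$\mathbb{S}^d$ balls whose tangent points are very close relative to their diameters must overlap) then contradicts the pairwise disjointness of the standard horoball family $\{H_p\}_{p\in P}$. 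I would make this precise by noting that $H_p$ contains the point at Euclidean distance $\size{H_p}/2$ from $p$ along the inward normal, and when $\size{p-q}$ is small compared to the diameters, $H_p \cap H_q \ne \emptyset$; the constant $2$ (rather than $1$) in the hypothesis $\tau^{k+1} > 2R$ gives the needed slack. This shows the count is at most $1$.

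For the first assertion (the upper bound $\#\{\cdots\} \lesssim_\tau \tau^{-k\delta}\mups(B(z,R))$ when $\tau^k \lesssim R$): the key point is that the horoballs counted are pairwise disjoint, and for $p$ in the count, the Patterson-Sullivan measure of a ball of radius $\approx \size{H_p} \approx \tau^k$ centred at $p$ is comparable to $\tau^{k\delta}$. Indeed, for a parabolic point $p$, taking $T$ with $e^{-T} \approx \size{H_p}$, the point $p_T$ sits on the boundary of (or has just exited) $H_p$, so $\rho(p,T) \approx 0$ and the global measure formula \eqref{GlobThm} gives $\mups(B(p, \size{H_p})) \approx \size{H_p}^\delta \approx \tau^{k\delta}$. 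Now, since $p \in B(z,R)$ and $\size{H_p} < \tau^k \lesssim R$, each such ball $B(p, c\tau^k)$ (for a suitable small constant $c$) is contained in $B(z, 2R)$, say, and the balls $\{B(p, c\tau^k)\}_p$ over the counted parabolic points have bounded overlap — this is where I would use that distinct counted $p$'s are separated by $\gtrsim \tau^k$, which again follows from the disjointness of the standard horoballs of comparable size (two disjoint balls of diameter $\approx \tau^k$ tangent to $\mathbb{S}^d$ have tangent points separated by $\gtrsim \tau^k$, as in the second part). Summing, $N \cdot \tau^{k\delta} \approx \sum_p \mups(B(p,c\tau^k)) \lesssim \mups(B(z,2R)) \approx \mups(B(z,R))$, where the last comparison uses the standard doubling-type property of $\mu_\delta$ (a consequence of the global measure formula). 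Rearranging yields $N \lesssim_\tau \tau^{-k\delta}\mups(B(z,R))$.

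The main obstacle I anticipate is making the separation/bounded-overlap estimate fully rigorous and uniform: I need that any two disjoint horoballs $H_p, H_q$ with diameters in $[\tau^{k+1}, \tau^k)$ have $\size{p - q} \gtrsim \tau^k$ with an implied constant depending only on $\tau$ (or ideally not at all). This is a clean elementary fact about tangent spheres, but it has to be stated carefully because the horoballs are tangent to a sphere $\mathbb{S}^d$ rather than a hyperplane; passing to the upper half-space model via the Cayley transform (where horoballs at finite parabolic points become genuine Euclidean balls tangent to $\mathbb{R}^d$, and the comparison of Euclidean metrics is bi-Lipschitz on compact pieces away from $\infty$) is the cleanest way to reduce to the flat picture, though one must be slightly cautious about the parabolic point at $\infty$, if present, which can be treated by a separate direct argument or by choosing the normalisation so that $\infty \notin \lset$. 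Everything else is bookkeeping with the global measure formula.
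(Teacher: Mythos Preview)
Your proposal is correct and follows essentially the same route as the paper: for the first assertion, both you and the paper use disjointness of the standard horoballs to get that the parabolic points counted are $\gtrsim \tau^{k}$-separated, then apply the global measure formula \eqref{GlobThm} to see each contributes $\mu_\delta$-mass $\approx \tau^{k\delta}$ inside $B(z,R)$ (the paper phrases this by summing $\mups(B(p,\tau^{k+1}/10)\cap B(z,R))$ and invoking doubling, while you sum $\mups(B(p,c\tau^k))$ inside $B(z,2R)$ and invoke doubling at the end---equivalent bookkeeping); for the second assertion, both arguments are the immediate observation that two horoballs of diameter exceeding $2R$ tangent at points of $B(z,R)$ would intersect. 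Your extra care about $\mathbb{S}^d$ versus the flat boundary is more than the paper provides, but is indeed a routine Cayley-transform remark.
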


We defer the proof of Theorem \ref{theoretical} until Section \ref{prooftheoretical}.  Clearly the theoretical upper bound given above  is far from being achieved in general once we are beyond the scope of Theorem \ref{localvelani}.  For example, if $z=p$ is a parabolic fixed point and $R$ is sufficiently small compared to $|H_p|$, then there can be no horoballs tangent to points in $B(p,R)$ of diameter much bigger than $R^2$ apart from $H_p$ itself.  This is made precise in Theorem \ref{proximity} below.  The interesting and most subtle case is when $z$ is relatively far from parabolic fixed points associated with large horoballs.  In this case, there may be many `intermediate' horoballs present, but we should not expect as many as the theoretical maximum described in Theorem \ref{localvelani}.

\begin{thm} \label{intermediate}
For all sufficiently small $\tau \in (0,1)$ there exist $c_1>1$ and $c_2 \in (0,1)$ such that for all sufficiently small $R>0$ and all $k \in \mathbb{N}$ such that $R^2< \tau^k < R$ and $z \in L(\Gamma)$ for which there exists $p_0 \in P$ with
\[
c_1 \tau^{k/2} \leq |z-p_0| \leq c_2 \sqrt{R |H_{p_0}|}
\]
we have
\[
\# \left\{ p \in P \cap B(z, R) \mid \tau^{k+1} \leq |H_p| < \tau^k \right\} \gtrsim_\tau \tau^{-k \delta} \mups(B(z,\tau^{k/2})) \left( \frac{R}{\tau^{k/2}}\right)^{\mathbf{k}(p_0)}.
\]
In particular, if $\delta=\kmin=\kmax$, then
\[
\# \left\{ p \in P \cap B(z, R) \mid \tau^{k+1} \leq |H_p| < \tau^k \right\} \approx_\tau  \left(\frac{R}{\tau^{k}}\right)^\delta.
\]
\end{thm}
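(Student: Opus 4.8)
The plan is to reduce the local count inside $B(z,R)$ to a global-type count, using the hypothesis that $z$ is close to a parabolic point $p_0$ associated with a horoball of controlled size. The key geometric idea is that if $|z - p_0| \approx \tau^{k/2}$ (up to the constants $c_1, c_2$), then the ball $B(z,\tau^{k/2})$ is, roughly, the `shadow' of the relevant piece of the horoball $H_{p_0}$, and inside such a ball we should be able to invoke Theorem \ref{localvelani}: since $\tau^k < (\tau^{k/2})^2 \cdot \tau^{-\epsilon}$ fails marginally, one instead works just below the critical scale and counts horoballs of size $\approx \tau^k$ inside $B(z,\tau^{k/2})$, obtaining $\approx_\tau \tau^{-k\delta}\mu_\delta(B(z,\tau^{k/2}))$ of them. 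The first step, therefore, is to carefully check that the hypotheses of Theorem \ref{localvelani} (or a minor variant covering the borderline scale) apply with $R$ there replaced by $\tau^{k/2}$, so that $B(z,\tau^{k/2})$ contains the expected maximal count.

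The second step is the covering/orbit argument that promotes this count from $B(z,\tau^{k/2})$ to $B(z,R)$. Here I would use the parabolic structure at $p_0$: the stabiliser $\mathrm{Stab}(p_0)$ contains a free abelian group of rank $\mathbf{k}(p_0)$, and its action near $p_0$ translates (in suitable coordinates, after conjugating $p_0$ to $\infty$ in the upper half-space model) by a rank-$\mathbf{k}(p_0)$ lattice. Tiling the ball $B(z,R)$ by roughly $(R/\tau^{k/2})^{\mathbf{k}(p_0)}$ translates of a fundamental-domain-sized region — each of diameter $\approx \tau^{k/2}$ and each, by equivariance $g(H_p) = H_{g(p)}$, carrying its own copy of the horoballs counted in the previous step — gives the stated lower bound
\[
\#\left\{p \in P \cap B(z,R) \mid \tau^{k+1} \leq |H_p| < \tau^k\right\} \gtrsim_\tau \tau^{-k\delta}\mu_\delta(B(z,\tau^{k/2}))\left(\frac{R}{\tau^{k/2}}\right)^{\mathbf{k}(p_0)}.
\]
One must check that these translated copies genuinely lie inside $B(z,R)$ and are essentially disjoint, which is where the constraint $|z - p_0| \geq c_1\tau^{k/2}$ (keeping $z$ off the horoball so that the shadow picture is valid) and $|z-p_0| \leq c_2\sqrt{R|H_{p_0}|}$ (ensuring the lattice fits many periods into $B(z,R)$) are used; $c_1$ large and $c_2$ small absorb the distortion constants of the conjugating Möbius map.

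For the final `in particular' claim, assume $\delta = \kmin = \kmax$, so $\mathbf{k}(p) = \delta$ for every $p \in P$ and, crucially, the global measure formula \eqref{GlobThm} degenerates: the exponent $\delta - \mathbf{k}(z,T)$ vanishes, hence $\mu_\delta(B(z,r)) \approx r^\delta$ with no parabolic fluctuation at all. Substituting $\mu_\delta(B(z,\tau^{k/2})) \approx \tau^{k\delta/2}$ and $\mathbf{k}(p_0) = \delta$ into the lower bound gives
\[
\tau^{-k\delta}\,\tau^{k\delta/2}\left(\frac{R}{\tau^{k/2}}\right)^{\delta} = \tau^{-k\delta/2}\,R^\delta\,\tau^{-k\delta/2} = \left(\frac{R}{\tau^k}\right)^\delta,
\]
which is the asserted lower bound; the matching upper bound is immediate from the first conclusion of Theorem \ref{theoretical} (valid since $\tau^k < R$ forces $\tau^k \lesssim R$), again using $\mu_\delta(B(z,R)) \approx R^\delta$.

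The main obstacle I anticipate is the second step: making the lattice-tiling argument rigorous through the Möbius conjugation. One must control the Euclidean-to-hyperbolic distortion uniformly, verify that the translated regions sit inside $B(z,R)$ rather than merely near it, and confirm that distinct translates contribute genuinely distinct parabolic points with horoballs still in the size window $[\tau^{k+1}, \tau^k)$ — the horoball sizes are only \emph{comparable} under the group action, not preserved, so one needs the window to be robust under bounded multiplicative errors, which is why the statement is phrased with $\approx_\tau$ and $\gtrsim_\tau$ and why $\tau$ must be taken sufficiently small. A secondary subtlety is the borderline invocation of Theorem \ref{localvelani} at the scale $\tau^k \approx (\tau^{k/2})^2$: the constant $C$ there may force us to count at scale $\tau^{k'}$ for $k'$ slightly larger than $k$, and one must check this costs only a $\tau$-dependent constant.
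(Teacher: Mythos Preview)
Your overall architecture (count at scale $\tau^{k/2}$, then use the parabolic stabiliser of $p_0$) and your handling of the borderline invocation of Theorem \ref{localvelani} and of the ``in particular'' clause are all in line with the paper. But the heart of your second step is built on a reversed picture of the scales, and as written it does not go through.

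From $R^2<\tau^k$ you get $R<\tau^{k/2}$, so $B(z,R)$ is the \emph{small} ball and $B(z,\tau^{k/2})$ the \emph{large} one. Your plan to ``tile $B(z,R)$ by roughly $(R/\tau^{k/2})^{\mathbf{k}(p_0)}$ translates of diameter $\approx \tau^{k/2}$'', each carrying its own copy of the horoball count, therefore makes no sense: each ``tile'' would be larger than the target, and the number of tiles is less than $1$. The factor $(R/\tau^{k/2})^{\mathbf{k}(p_0)}$ in the conclusion is a \emph{loss}, not a gain, and the argument must reflect that. What the paper does instead is the reverse: having counted $\gtrsim_\tau \tau^{-k\delta}\mu_\delta(B(z,\tau^{k/2}))$ horoballs inside the large ball $B(z,\textup{const}\cdot\tau^{k/2})$, it observes (via Bowditch's definition of geometric finiteness) that near $p_0$ the limit set is squeezed into a $\lesssim R$-neighbourhood of a $\mathbf{k}(p_0)$-dimensional plane, so that $L(\Gamma)\cap B(z,\textup{const}\cdot\tau^{k/2})$ can be covered by $\lesssim (\tau^{k/2}/R)^{\mathbf{k}(p_0)}$ balls of radius $R$. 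A pigeonhole then produces a single ball $B(x,R/3)$ containing $\gtrsim_\tau \tau^{-k\delta}\mu_\delta(B(z,\tau^{k/2}))(R/\tau^{k/2})^{\mathbf{k}(p_0)}$ horoballs, and a \emph{single} element of the parabolic stabiliser moves $B(x,R/3)$ into $B(z,R)$. The constraints on $c_1,c_2$ are used exactly here, to control the Jacobian of the conjugating inversion (so that horoball sizes change by at most a factor $2$, which is why one first counts in the window $[2\tau^{k+1},\tau^k/2)$) and to ensure the lattice separation, which is $\approx |H_{p_0}|^{-1}$ after conjugation, is fine enough to hit the image of $B(z,R)$.

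The piece you are missing entirely is the ``thin strip'' input: without knowing that the parabolic points in the large ball lie near a $\mathbf{k}(p_0)$-plane, a pigeonhole over $R$-balls would only give an exponent $d$, not $\mathbf{k}(p_0)$. Your proposal never invokes this geometric fact, and your tiling picture, even once the scales are corrected, would not recover it.
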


We defer the proof of Theorem \ref{intermediate} until Section \ref{proofintermediate}. We note that it will always be possible to apply Theorem \ref{intermediate} for some $z \in \lset$ close to a given parabolic fixed point $p_0$.  In particular, for $z_0 \neq p_0$ and $f$ a parabolic transformation fixing $p$, the sequence $f^n(z_0)$ approaches $p_0$ at a polynomial rate ($\approx 1/n$ with implicit constants depending on $z_0$ and $f$).  Therefore if $R$ is sufficiently small compared with $|H_{p_0}|$, and we choose $k$ to ensure that the interval we need to bound $|z-p_0|$ within is sufficiently large compared to the square of the right end of that interval, that is,
\[
\frac{ c_2 \sqrt{R |H_{p_0}|} - c_1 \tau^{k/2}}{ c_2^2R |H_{p_0}|}
\]
is sufficiently large, then we can choose $n$ such that 
\[
c_1 \tau^{k/2} \leq |f^n(z_0)-p_0| \leq c_2 \sqrt{R |H_{p_0}|}.
\]
Finally, we provide a result in the other direction.  If $z$ is relatively too close to a parabolic point associated with a large horoball $|H_p|$, then there cannot be other large horoballs contributing to the local count apart from possibly $H_p$ itself.

\begin{thm} \label{proximity}
Let $\lambda \in (1,2]$, $c \geq 1$, $z \in L(\Gamma)$, and $R>0$.  If there exists $p_0 \in P$ with
\[
c|H_{p_0}| R^\lambda \geq c^2R^{2\lambda}+(|z-p_0|+R)^2,
\]
then
\[
\# \left\{ p \in P \cap B(z, R) \mid  cR^\lambda \leq |H_p| \right\}  \leq 1.
\]
\end{thm}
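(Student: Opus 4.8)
The plan is to argue by contradiction, using a single elementary geometric fact about the standard horoballs: distinct horoballs are disjoint, and this disjointness forces a lower bound on the Euclidean distance between the associated parabolic points.

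The first step is to record the separation estimate: for distinct $p, q \in P$ one has $|p-q| \geq \sqrt{|H_p|\,|H_q|}$. In the ball model $\mathbb{D}^{d+1}$, the horoball $H_p$ is the Euclidean ball with centre $(1-|H_p|/2)\,p$ and radius $|H_p|/2$; it is internally tangent to $\mathbb{S}^d$ at $p$, and since it does not contain $\mathbf{0}$ we have $|H_p| \leq 1$. Imposing that $H_p$ and $H_q$ have disjoint interiors, i.e.\ that the distance between their centres is at least the sum of their radii, and simplifying using $\langle p, q \rangle = 1 - |p-q|^2/2$, gives
\[
|p-q|^2 \;\geq\; \frac{|H_p|\,|H_q|}{(1-|H_p|/2)(1-|H_q|/2)} \;\geq\; |H_p|\,|H_q|.
\]
This is a standard computation and may already be available earlier in the paper or in the cited literature.

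Now suppose, for a contradiction, that the set $\{p \in P \cap B(z,R) \mid cR^\lambda \leq |H_p|\}$ contains two distinct points. At most one of them can coincide with $p_0$, so I may choose an element $p_1$ of this set with $p_1 \neq p_0$. Applying the separation estimate to $p_0$ and $p_1$ and using $|H_{p_1}| \geq cR^\lambda$,
\[
|p_0 - p_1| \;\geq\; \sqrt{|H_{p_0}|\,|H_{p_1}|} \;\geq\; \sqrt{c\,|H_{p_0}|\,R^\lambda}.
\]
On the other hand, $p_1 \in B(z,R)$, so the triangle inequality gives $|p_0 - p_1| \leq |z - p_0| + |z - p_1| \leq |z - p_0| + R$; note that this does not require $p_0$ to lie in $B(z,R)$. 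Squaring and combining the two bounds yields $c\,|H_{p_0}|\,R^\lambda \leq (|z-p_0|+R)^2$, which contradicts the hypothesis $c\,|H_{p_0}|\,R^\lambda \geq c^2 R^{2\lambda} + (|z-p_0|+R)^2$ because $c^2 R^{2\lambda} > 0$. Hence the set has at most one element.

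I do not expect a serious obstacle: the argument is just one application of horoball disjointness together with the triangle inequality. The one point deserving care — since the conclusion is an exact, constant-free inequality — is to establish the separation estimate with the sharp constant $1$ directly in the ball model, as it is sometimes quoted only up to multiplicative constants or stated in the upper half-space model. The role of the term $c^2 R^{2\lambda}$ in the hypothesis is precisely to make the final comparison strict and thereby force the contradiction; the constraints $\lambda \in (1,2]$ and $c \geq 1$ are not otherwise used in the argument (beyond, implicitly, ensuring that the hypothesis is satisfiable).
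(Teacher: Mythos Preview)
Your proof is correct and follows essentially the same route as the paper: both obtain a contradiction from the pairwise disjointness of the standard horoballs via a Pythagorean computation. The paper packages this as a right-triangle picture inside $H_{p_0}$ (leading to $c|H_{p_0}|R^\lambda < c^2R^{2\lambda}+(|z-p_0|+R)^2$), while you isolate the equivalent separation inequality $|p-q|^2 \geq |H_p|\,|H_q|$ directly in the ball model; your version is a touch cleaner and even yields the marginally sharper bound $c|H_{p_0}|R^\lambda \leq (|z-p_0|+R)^2$, but the substance is identical.
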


We defer the proof of Theorem \ref{proximity} until Section \ref{proofproximity}.  It is useful to note that the assumption in Theorem \ref{proximity} guarantees 
\[
\sqrt{c} R^{\lambda/2} >|z-p_0|
\]
which forbids the assumption of Theorem \ref{intermediate} with $\tau^k = R^\lambda$ and $c_1=\sqrt{c}$.  Moreover, the assumption in Theorem \ref{proximity} always holds when $z=p \in P$ and $R$ is small enough in terms of $|H_p|$,  $\lambda$ and $c$.  When $\lambda <2$, we can take $c=1$ but for $\lambda = 2$, we need $c > |H_{p_0}| ^{-1}$.

\subsection{Application: Diophantine approximation and counting rationals}

Diophantine approximation is traditionally the study of how well real numbers are approximated by rationals.  There are well-established links between this and hyperbolic geometry, especially via the modular group $\textup{PSL}(2,\mathbb{Z})$.  For additional information on (the more general theory of) Diophantine approximation on Kleinian groups, we refer the reader to \cite{diophantine, stratmanndio}.  For example,  an application of \cite[Theorem DT']{diophantine} recovers the one-dimensional case of Dirichlet's Theorem.

Consider the upper half-plane model $\mathbb{H}^2$, and the action of the Kleinian group $\Gamma = \textup{PSL}(2,\mathbb{Z})$ on $\mathbb{H}^2$. It is an easy exercise to show that $\lset = \mathbb{R} \cup \{\infty\}$, $\delta=\kmax=\kmin =1$,  $P = \mathbb{Q} \cup \{\infty\}$, and $\size{H_{p/q}} \approx 1/q^2$  for  $p/q \in P$ with $p$, $q$ coprime integers.    Thus, through applications of Theorems \ref{localvelani}, \ref{intermediate}, and \ref{proximity}, we get the following results which may be expressed purely in terms of real numbers (that is, having nothing \emph{a priori} to do with hyperbolic geometry).  Corollary \ref{diocorollary} and \ref{diocorollary3} are straightforward to derive   directly using basic Diophantine properties of rationals, but we include their statements as simple examples of how our work can be applied.  Corollary \ref{diocorollary2} is perhaps more interesting in its own right.

\begin{cor}\label{diocorollary}
For all sufficiently small $\tau \in(0,1)$, there exists $C \in (0,1)$ such that for all sufficiently small $R>0$, all $z \in \mathbb{R}$, and for all $k \in \mathbb{N}$ such that $\tau^k < CR^2$, we have
\[\sum_{\substack{q \in \mathbb{N} :\\ \tau^{-k} < q^2  \leq \tau^{-k-1}}} \#\left\{p \in \mathbb{Z} \mid \gcd(p,q) = 1, \ |p/q-z| \leq R\right\} \approx_\tau \tau^{-k}R.  \]
\end{cor}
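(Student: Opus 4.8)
The plan is to derive Corollary \ref{diocorollary} directly from Theorem \ref{localvelani} by translating its statement through the dictionary $\Gamma = \textup{PSL}(2,\mathbb{Z})$, $\lset = \mathbb{R} \cup \{\infty\}$, $P = \mathbb{Q} \cup \{\infty\}$, and $|H_{p/q}| \approx 1/q^2$. First I would note that for a rational $p/q$ in lowest terms, the condition $\tau^{k+1} \leq |H_{p/q}| < \tau^k$ is, up to adjusting the implicit constants and $\tau$, equivalent to $\tau^{-k} < q^2 \leq \tau^{-k-1}$ (since $|H_{p/q}|$ is comparable to $1/q^2$, one loses only a bounded multiplicative factor, which can be absorbed by passing from $\tau$ to a slightly different base or by noting the $\approx_\tau$ already allows $\tau$-dependent constants; a cleaner route is to observe that the dyadic-type blocks indexed by $k$ for $|H_p|$ and by $k$ for $q^2$ differ by a bounded shift, so each block on one side is covered by boundedly many blocks on the other). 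Thus the left-hand side of the corollary counts exactly $\#\{p/q \in P \cap B(z,R) : \tau^{k+1} \leq |H_{p/q}| < \tau^k\}$ up to a comparison that only costs a $\tau$-dependent constant.

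Next I would invoke Theorem \ref{localvelani}: for $\tau$ small there is $C \in (0,1)$ such that for all $z \in \lset$, all small $R>0$, and all $k$ with $\tau^k < CR^2$,
\[
\#\left\{p \in P \cap B(z,R) \mid \tau^{k+1} \leq |H_p| < \tau^k\right\} \approx_\tau \tau^{-k\delta}\, \mu_\delta(B(z,R)).
\]
Here $\delta = 1$, so $\tau^{-k\delta} = \tau^{-k}$. It remains to identify $\mu_\delta(B(z,R)) \approx R$. This follows from the global measure formula \eqref{GlobThm}: since $\kmin = \kmax = 1 = \delta$, the exponent $\delta - \mathbf{k}(z,T)$ is either $0$ (when $z_T$ lies in a horoball) or $\delta - 0 = \delta$ but with $\rho(z,T) = 0$ in that latter case, so in all cases $\mu_\delta(B(z,e^{-T})) \approx e^{-T\delta} = e^{-T}$; setting $e^{-T} = R$ gives $\mu_\delta(B(z,R)) \approx R$. (Equivalently, one can cite that $\mu_\delta$ is comparable to $\delta$-dimensional Lebesgue-type measure on $\mathbb{R}$ precisely when there is no parabolic fluctuation, i.e. when $\delta = \kmin = \kmax$.) Combining, the left-hand side is $\approx_\tau \tau^{-k} R$, which is the claim.

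One minor technical point to handle carefully is the role of the parabolic point $\infty \in P$: the ball $B(z,R) \subseteq \mathbb{R}$ for $z \in \mathbb{R}$ and $R$ small never contains $\infty$, so the count over $P \cap B(z,R)$ is genuinely a count over rationals $p/q$, matching the sum in the corollary; and the normalisation $|H_\infty|$ is irrelevant. A second point is that Theorem \ref{localvelani} is stated for Euclidean balls in $\mathbb{S}^d$; in the upper-half-plane picture $\lset = \mathbb{R}\cup\{\infty\}$ is a line in $\mathbb{R}^2$ (after the Cayley transform, a circle in $\mathbb{S}^1$), so restricting to $z \in \mathbb{R}$ and small $R$ the Euclidean ball $B(z,R)$ in the model agrees with the interval $(z-R,z+R)$, and the comparability $|H_{p/q}| \approx q^{-2}$ holds with constants depending only on $\Gamma$. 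I expect the main (though still modest) obstacle to be the bookkeeping in the first step: reconciling the block structure "$\tau^{k+1}\leq |H_{p/q}| < \tau^k$" with the block structure "$\tau^{-k} < q^2 \leq \tau^{-k-1}$" given that $|H_{p/q}|$ equals $q^{-2}$ only up to bounded multiplicative constants. This is resolved either by allowing the statement's implicit constants to absorb a bounded reindexing of $k$, or by first proving the corollary with $q^{-2}$ replaced by $|H_{p/q}|$ and then noting the two sums differ by summing over boundedly many adjacent values of $k$, each contributing $\approx_\tau \tau^{-k}R$, which does not change the order of magnitude.
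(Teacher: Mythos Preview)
Your proposal is correct and follows exactly the approach indicated in the paper, which does not give a separate proof but simply states that the corollary follows from Theorem~\ref{localvelani} applied to $\Gamma=\textup{PSL}(2,\mathbb{Z})$ via the dictionary $\lset=\mathbb{R}\cup\{\infty\}$, $\delta=\kmin=\kmax=1$, $P=\mathbb{Q}\cup\{\infty\}$, $|H_{p/q}|\approx 1/q^2$. Your verification that $\mu_\delta(B(z,R))\approx R$ from the global measure formula, and your handling of the bounded reindexing between the $|H_{p/q}|$-blocks and the $q^2$-blocks, are exactly the details the paper leaves implicit.
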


\begin{cor}\label{diocorollary2}
For all sufficiently small $\tau \in (0,1)$ there exist $c_1>1$ and $c_2 \in (0,1)$ such that for all sufficiently small $R>0$ and all $k \in \mathbb{N}$ such that $R^2< \tau^k < R$ and $z \in \mathbb{R}$ for which there exist coprime $p_0 \in \mathbb{Z}$ and $q_0 \in \mathbb{N}$  with
\[
c_1 \tau^{k/2} \leq |z-p_0/q_0| \leq c_2 q_0^{-1} \sqrt{R} 
\]
we have
\[
\sum_{\substack{q \in \mathbb{N} :\\ \tau^{-k} < q^2  \leq \tau^{-k-1}}}\# \left\{ p \in \mathbb{Z}  \mid \gcd(p,q)=	1, \ |p/q-z| \leq R \right\} \approx_\tau \tau^{-k}  R.
\]
\end{cor}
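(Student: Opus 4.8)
The strategy is to translate the hypotheses and conclusion into the hyperbolic dictionary for $\Gamma = \textup{PSL}(2,\mathbb{Z})$ acting on $\mathbb{H}^2$, apply Theorems \ref{intermediate} and \ref{theoretical}, and then translate back. Recall from the preamble to the corollaries that $L(\Gamma) = \mathbb{R} \cup \{\infty\}$, $\delta = \kmin = \kmax = 1$, $P = \mathbb{Q} \cup \{\infty\}$, and $|H_{p/q}| \approx q^{-2}$ for coprime $p,q$. With the choice $\tau^k \asymp R^\lambda$ implicit in the interval condition, the hypothesis $c_1 \tau^{k/2} \le |z - p_0/q_0| \le c_2 q_0^{-1}\sqrt{R}$ is (up to the absorbed constants $q_0^{-2} \approx |H_{p_0/q_0}|$) precisely the two-sided bound $c_1 \tau^{k/2} \le |z - p_0| \le c_2 \sqrt{R\,|H_{p_0}|}$ required in Theorem \ref{intermediate}, with $p_0 = p_0/q_0$. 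The counting set on the left of the corollary, namely the coprime pairs $(p,q)$ with $\tau^{-k} < q^2 \le \tau^{-k-1}$ and $|p/q - z| \le R$, is exactly $\#\{p \in P \cap B(z,R) \mid \tau^{k+1} \le |H_p| < \tau^k\}$ once we use $|H_{p/q}| \approx q^{-2}$; since the comparability constant is absorbed into the $\approx_\tau$ and $\gtrsim_\tau$ (and we may shrink $\tau$), this identification is harmless.

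For the lower bound, I apply Theorem \ref{intermediate} directly. Since $\delta = \kmin = \kmax$, the ``in particular'' clause of that theorem gives
\[
\#\left\{p \in P \cap B(z,R) \mid \tau^{k+1} \le |H_p| < \tau^k\right\} \gtrsim_\tau \left(\frac{R}{\tau^k}\right)^\delta = \frac{R}{\tau^k} = \tau^{-k} R,
\]
using $\delta = 1$. For the matching upper bound, I invoke Theorem \ref{theoretical}: the condition $\tau^k < R$ (which holds, as $R^2 < \tau^k < R$ is assumed) gives $\tau^k \lesssim R$, so the first clause of Theorem \ref{theoretical} yields
\[
\#\left\{p \in P \cap B(z,R) \mid \tau^{k+1} \le |H_p| < \tau^k\right\} \lesssim_\tau \tau^{-k\delta}\, \mups(B(z,R)) = \tau^{-k}\, \mups(B(z,R)) \lesssim \tau^{-k} R,
\]
where the last step uses the (well-known, and here elementary) fact that for $\Gamma = \textup{PSL}(2,\mathbb{Z})$ the Patterson-Sullivan measure $\mu_1$ is comparable to Lebesgue measure on a fundamental interval, so $\mups(B(z,R)) \approx R$. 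Combining the two displays gives $\approx_\tau \tau^{-k} R$, as claimed.

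The main obstacle — really the only non-bookkeeping point — is justifying $\mups(B(z,R)) \approx R$ and ensuring all the constant identifications (between $|H_{p/q}|$ and $q^{-2}$, between the interval $\tau^{-k} < q^2 \le \tau^{-k-1}$ and the band $\tau^{k+1} \le |H_p| < \tau^k$, and between $|z - p_0/q_0|$ with the $q_0^{-1}$ factored out and $\sqrt{R|H_{p_0}|}$) can be absorbed without spoiling the two-sided estimate. The measure estimate follows either from the global measure formula \eqref{GlobThm} — since for $\textup{PSL}(2,\mathbb{Z})$ every parabolic point has rank $\mathbf{k}(p) = 1 = \delta$, the fluctuation term $e^{-\rho(z,T)(\delta - \mathbf{k}(z,T))}$ is identically $1$, giving $\mups(B(z,e^{-T})) \approx e^{-T}$ — or directly from ergodicity and translation-invariance under $z \mapsto z+1$. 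For the constant identifications, one notes that replacing $R$ by a fixed multiple of $R$ and $\tau^k$ by a fixed multiple of $\tau^k$ only shifts $k$ by a bounded amount and rescales the final count by a bounded factor; since the conclusion is stated up to $\approx_\tau$ and we are free to take $\tau$ small, all such adjustments are absorbed. One should also check the mild compatibility of the hypotheses of Theorems \ref{intermediate} and \ref{theoretical} (both require $R$ small and, for Theorem \ref{intermediate}, $R^2 < \tau^k < R$, which is exactly what is assumed), so both may be applied simultaneously.
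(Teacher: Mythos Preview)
Your proposal is correct and matches the paper's approach: the paper presents Corollary~\ref{diocorollary2} as a direct specialisation of Theorem~\ref{intermediate} to $\Gamma=\textup{PSL}(2,\mathbb{Z})$, using the dictionary $\delta=\kmin=\kmax=1$, $P=\mathbb{Q}\cup\{\infty\}$, $|H_{p/q}|\approx q^{-2}$. One small redundancy: the ``in particular'' clause of Theorem~\ref{intermediate} already gives the two-sided estimate $\approx_\tau (R/\tau^k)^\delta$ (its upper bound being obtained in the proof precisely via Theorem~\ref{theoretical} and Ahlfors--David regularity of $\mu_\delta$), so your separate invocation of Theorem~\ref{theoretical} for the upper bound simply re-derives what that clause already packages---harmless, but you could cite the clause once and be done.
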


\begin{cor} \label{diocorollary3}
Let $\lambda \in (1,2]$,  $c \geq 1$, $z \in \mathbb{R}$, and $R>0$.  If there exist coprime $p_0 \in \mathbb{Z}$ and $q_0 \in \mathbb{N}$ such that
\[
c\frac{R^\lambda}{q_0^2} \geq c^2 R^{2\lambda}+(|z-p_0/q_0|+R)^2,
\]
then
\[
\# \left\{ p \in \mathbb{Z},\  q \in \mathbb{N} \mid \gcd(p,q)=1, \ |p/q-z|\leq R, \ cR^\lambda \leq 1/q^2 \right\}  \leq 1.
\]
\end{cor}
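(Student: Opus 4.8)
The plan is to deduce this directly from Theorem \ref{proximity} via the standard dictionary between $\textup{PSL}(2,\mathbb{Z})$ acting on $\mathbb{H}^2$ and Diophantine approximation. Recall from the discussion preceding the corollary that for this group $L(\Gamma) = \mathbb{R} \cup \{\infty\}$, the set of parabolic fixed points is $P = \mathbb{Q} \cup \{\infty\}$, and $|H_{p/q}| \approx 1/q^2$ for coprime $p \in \mathbb{Z}$, $q \in \mathbb{N}$. The key subtlety is that the comparability $|H_{p/q}| \approx 1/q^2$ carries implicit constants, whereas the hypothesis of Theorem \ref{proximity} is an \emph{exact} inequality. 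So I would first address this constant issue: since the family $\{H_p\}_{p\in P}$ is only determined up to the standard normalisations, I would fix a specific choice of horoballs for $\textup{PSL}(2,\mathbb{Z})$ for which $|H_{p/q}|$ equals (a fixed constant multiple of) $1/q^2$ exactly — this is classical, the horoball at $p/q$ in the upper half-plane being the Euclidean disc of diameter $1/q^2$ tangent to $\mathbb{R}$ at $p/q$, and the horoball at $\infty$ being $\{ \textup{Im}\, z \ge 1\}$ (which one checks is disjoint from the others and $\Gamma$-equivariant). With such an explicit choice the comparability becomes an identity $|H_{p/q}| = 1/q^2$ and the hypotheses of the two statements match verbatim after setting $p_0/q_0$ for the parabolic point $p_0 \in P$ in Theorem \ref{proximity}.

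Next I would apply Theorem \ref{proximity} with $z \in \mathbb{R} = L(\Gamma)$, the given $\lambda \in (1,2]$ and $c \ge 1$, and the parabolic point $p_0/q_0$: the hypothesis
\[
c \frac{R^\lambda}{q_0^2} \ge c^2 R^{2\lambda} + (|z - p_0/q_0| + R)^2
\]
is exactly $c |H_{p_0/q_0}| R^\lambda \ge c^2 R^{2\lambda} + (|z - p_0/q_0| + R)^2$, so Theorem \ref{proximity} gives
\[
\#\left\{ p \in P \cap B(z,R) \mid cR^\lambda \le |H_p| \right\} \le 1.
\]
Finally I would translate the left-hand side back into arithmetic terms: an element of $P \cap B(z,R)$ is a rational $p/q$ (in lowest terms) with $|p/q - z| \le R$ — note $\infty$ is excluded automatically since $R$ is finite — and the condition $cR^\lambda \le |H_{p/q}| = 1/q^2$ is precisely $cR^\lambda \le 1/q^2$. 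Hence the counted set is in bijection with $\{ (p,q) : \gcd(p,q) = 1,\ |p/q - z| \le R,\ cR^\lambda \le 1/q^2 \}$, giving the claimed bound of $1$.

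The only genuine point requiring care — and hence the main (modest) obstacle — is the passage from the $\approx$ in $|H_{p/q}| \approx 1/q^2$ to an exact statement compatible with the exact inequality in Theorem \ref{proximity}; I would handle this by committing to the canonical explicit horoball packing described above and verifying pairwise disjointness, that $\mathbf{0}$ (equivalently the appropriate basepoint in $\mathbb{H}^2$) lies in no horoball, and $\Gamma$-equivariance, all of which are routine computations with M\"obius maps. Everything else is a direct unwinding of definitions. (If one prefers not to fix an explicit packing, one can instead absorb the comparability constant into $c$ and $R$, obtaining the corollary with the inequality replaced by a comparable one; since the corollary is stated as an illustrative example, I would record the clean explicit-packing version.)
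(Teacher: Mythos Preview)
Your proposal is correct and follows exactly the route the paper intends: the paper gives no separate proof of Corollary~\ref{diocorollary3}, merely noting that it is a direct application of Theorem~\ref{proximity} to $\Gamma=\textup{PSL}(2,\mathbb{Z})$. Your decision to pin down the standard Ford circle packing so that $|H_{p/q}|=1/q^2$ exactly (rather than $\approx 1/q^2$) is the right way to make the implicit argument precise, and your observation that the proof of Theorem~\ref{proximity} uses only pairwise disjointness of horoballs (not the basepoint condition) means the verification you propose is indeed routine.
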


\subsection{Application: global measure formulae for conformal measures}

We are interested in obtaining global measure formulae for the $s$-conformal measures $\mu_s$.  These formulae will of course be rather different from \eqref{GlobThm} for $s>\delta$ since these measures are purely atomic.  Using Theorem \ref{localvelani}, we   obtain the following global measure formula for the measures $\mu_s$.

\begin{thm}\label{Global2}
Fix $\tau$ and $C$ permitted by Theorem \ref{localvelani}.  Let $z \in \lset$ and $R>0$ be sufficiently small. Then
\begin{equation*}
\mu_s(B(z,R)) \approx_\tau  R^{2(s-\delta)} \mups(B(z,R)) \ + \  \sum\limits_{\substack{k \in \mathbb{N}: \\ {CR^2 \leq\tau^{k} < R}}} \#\{ p \in P \cap B(z,R) \mid  {\tau^{k+1} \leq \size{H_p} < \tau^{k}}\} \, \tau^{ks} \ \  + \  \size{H_{p'}}^s
\end{equation*}
where $p' \in P \cap B(z,R)$ is chosen to maximise $\size{H_{p'}}$ given that $\size{H_{p'}} \geq \tau^k $ for all integers  $k$ with $\tau^{k} < R$. If no such $p'$ exists then we take $\size{H_{p'}}^s =0$.
\end{thm}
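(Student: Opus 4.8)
The plan is to reduce the claim to the refined counting result of Theorem~\ref{localvelani} together with a geometric-series estimate. First I would invoke the facts recalled in the introduction (\cite[Corollary~20]{sullivandensity}, \cite[Lemma~3.4]{stratmannvelani}): for $s>\delta$ the measure $\mu_s$ is purely atomic with atoms exactly at the parabolic points and $\mu_s(\{p\})\approx|H_p|^s$ uniformly in $p\in P$, so that
\[
\mu_s(B(z,R)) \;=\; \sum_{p\in P\cap B(z,R)}\mu_s(\{p\}) \;\approx\; \sum_{p\in P\cap B(z,R)}|H_p|^s,
\]
a convergent sum. It therefore suffices to estimate $\sum_{p\in P\cap B(z,R)}|H_p|^s$ by partitioning the atoms according to the size of $|H_p|$. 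Let $k_0\in\mathbb{N}$ be the least integer with $\tau^{k_0}<R$ and $k_1\in\mathbb{N}$ the greatest integer with $\tau^{k_1}\ge CR^2$; for $R$ small these exist, $\tau^{k_0}\approx_\tau R$, $\tau^{k_1}\approx_\tau R^2$ and $k_0\le k_1$. Call $p\in P\cap B(z,R)$ \emph{large} if $|H_p|\ge\tau^{k_0}$, \emph{intermediate} if $\tau^{k_1+1}\le|H_p|<\tau^{k_0}$, and \emph{small} if $|H_p|<\tau^{k_1+1}$. A short bookkeeping check shows these are exactly the $p$ lying in the dyadic bands $\tau^{k+1}\le|H_p|<\tau^k$ with $k\le k_0-1$, with $k_0\le k\le k_1$, and with $k\ge k_1+1$ respectively, and that $\{k:k_0\le k\le k_1\}=\{k:CR^2\le\tau^k<R\}$ while $\{k:k\ge k_1+1\}=\{k:\tau^k<CR^2\}$.

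Next I would handle the three pieces separately. Since $\tau^{k+1}\le|H_p|<\tau^k$ forces $|H_p|^s\approx_\tau\tau^{ks}$, collecting the intermediate atoms band by band gives
\[
\sum_{p\ \mathrm{intermediate}}|H_p|^s \;\approx_\tau\; \sum_{\substack{k\in\mathbb{N}:\\ CR^2\le\tau^k<R}}\#\{p\in P\cap B(z,R)\mid\tau^{k+1}\le|H_p|<\tau^k\}\,\tau^{ks},
\]
which is exactly the middle term of the formula (no count is attempted here, this being the delicate regime of Theorems~\ref{intermediate} and \ref{proximity}). For the small atoms every relevant band has $k\ge k_1+1$, so $\tau^k<CR^2$ and Theorem~\ref{localvelani} applies with the fixed $\tau,C$, giving $\#\{p\in P\cap B(z,R)\mid\tau^{k+1}\le|H_p|<\tau^k\}\approx_\tau\tau^{-k\delta}\mu_\delta(B(z,R))$; summing over $k\ge k_1+1$ and again using $|H_p|^s\approx_\tau\tau^{ks}$,
\[
\sum_{p\ \mathrm{small}}|H_p|^s \;\approx_\tau\; \mu_\delta(B(z,R))\sum_{k\ge k_1+1}\tau^{k(s-\delta)}.
\]
As $s>\delta$ this geometric series converges and is comparable to its first term $\tau^{(k_1+1)(s-\delta)}\approx_\tau R^{2(s-\delta)}$, so the small atoms contribute $\approx_\tau R^{2(s-\delta)}\mu_\delta(B(z,R))$, the first term of the formula.

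Finally I would treat the large atoms, over which the definition of $p'$ ranges, and show that there are only boundedly many, so that their masses are dominated by the largest one $|H_{p'}|^s$. Any two distinct large $p,q\in B(z,R)$ satisfy $|p-q|\le 2R$, while disjointness of the standard horoballs yields the separation estimate $|p-q|\gtrsim\sqrt{|H_p||H_q|}\ge\tau^{k_0}\gtrsim_\tau R$; hence the large points are $\gtrsim_\tau R$-separated inside a ball of radius $R$, so there are at most a constant depending only on $\tau$ and $d$ of them. Therefore $\sum_{p\ \mathrm{large}}|H_p|^s\approx_\tau|H_{p'}|^s$, both sides being $0$ when no large point exists, and adding the three contributions gives the formula. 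The genuine content beyond routine bookkeeping is: (i) matching the dyadic index ranges to those in the statement; (ii) the convergence and summation of the small-atom geometric series, which is precisely where the hypothesis $s>\delta$ is used; and (iii) the horoball separation estimate that keeps the large atoms to a bounded number — I expect (iii), though standard in this setting, to be the point needing the most care.
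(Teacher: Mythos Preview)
Your proposal is correct and follows essentially the same approach as the paper: decompose $\mu_s(B(z,R))=\sum_{p\in P\cap B(z,R)}\mu_s(\{p\})$ into the three size regimes, leave the intermediate bands as the raw counting sum, and handle the small bands via Theorem~\ref{localvelani} and a geometric series in $\tau^{k(s-\delta)}$. The one place where you are actually more careful than the paper is step~(iii): the paper simply writes the third term as $\mu_s(\{p'\})$ without comment, whereas you supply the horoball separation estimate $|p-q|\gtrsim\sqrt{|H_p||H_q|}$ (an immediate Pythagorean consequence of disjointness) to show the large atoms are $\gtrsim_\tau R$-separated and hence bounded in number, so their total mass is $\approx_\tau|H_{p'}|^s$.
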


\begin{proof}
Since $\mu_s$ is purely atomic with atoms at points in $P$, we get
\begin{align}\label{purely}
\mu_s(B(z,R)) &= \sum_{p \in P \cap B(z,R)} \mu_s(\{p\}) \nonumber \\
              &= \sum\limits_{\substack{k \in \mathbb{N}: \\ {\tau^{k} < CR^2}}} \sum\limits_{\substack{p \in P \cap B(z,R): \\ {\tau^{k+1} \leq \size{H_p} < \tau^{k}}}} \mu_s(\{p\}) +
              \sum\limits_{\substack{k \in \mathbb{N}: \\ {CR^2 \leq\tau^{k} < R}}} \sum\limits_{\substack{p \in P \cap B(z,R): \\ {\tau^{k+1} \leq \size{H_p} < \tau^{k}}}} \mu_s(\{p\}) + \mu_s(\{p'\}).
\end{align}
Applying Theorem \ref{localvelani} to the first term gives us
\begin{equation*}
  \sum\limits_{\substack{k \in \mathbb{N}: \\ {\tau^{k} < CR^2}}} \sum\limits_{\substack{p \in P \cap B(z,R) \\ {\tau^{k+1} \leq \size{H_p} < \tau^{k}}}} \mu_s(\{p\}) 
  \approx_\tau  \sum\limits_{\substack{k \in \mathbb{N}: \\ {\tau^{k} < CR^2}}} \tau^{k(s-\delta)} \mups(B(z,R))
  \approx_\tau R^{2(s-\delta)} \mups(B(z,R))
\end{equation*}
and so (\ref{purely}) becomes
\begin{align*}
&R^{2(s-\delta)} \mups(B(z,R)) +  \sum\limits_{\substack{k \in \mathbb{N}: \\ {CR^2 \leq\tau^{k} < R}}} \sum\limits_{\substack{p \in P \cap B(z,R) : \\ {\tau^{k+1} \leq \size{H_p} < \tau^{k}}}} \mu_s(\{p\}) + \mu_s(\{p'\}) \\
\approx_\tau \ &R^{2(s-\delta)} \mups(B(z,R)) \ + \  \sum\limits_{\substack{k \in \mathbb{N}: \\ {CR^2 \leq\tau^{k} < R}}} \#\{ p \in P \cap B(z,R) \mid  {\tau^{k+1} \leq \size{H_p} < \tau^{k}}\} \, \tau^{ks} \ \  + \  \size{H_{p'}}^s,
\end{align*}
as required.
\end{proof}

A few remarks are in order.  Firstly, this global measure formula expresses the measure of an arbitrary ball as the sum of three terms.  The first term
\[
R^{2(s-\delta)} \mups(B(z,R))
\]
is the only term which is always non-zero and corresponds to parabolic points inside $B(z,R)$ with associated horoballs having diameter at most a constant multiple of $R^2$.  This may, as we shall see, take care of all of the measure of $B(z,R)$.  The first term also involves  $\mups(B(z,R))$, and so we may apply (\ref{GlobThm}) to re-express this in terms of $R$ and the familiar parabolic fluctuation function $\rho(z,T) = \rho(z,-\log R)$. 

The second term
\[
 \sum\limits_{\substack{k \in \mathbb{N}: \\ {CR^2 \leq\tau^{k} < R}}} \#\{ p \in P \cap B(z,R) \mid  {\tau^{k+1} \leq \size{H_p} < \tau^{k}}\} \, \tau^{ks} 
\]
is the most subtle and corresponds to parabolic points inside $B(z,R)$ with associated horoballs having diameter at least a constant multiple of $R^2$ and at most $R$.  There may be none of these and so this term can drop out.  Moreover, even if there are some but the associated horoballs  have diameter $\lesssim R^2$, then the second term can be subsumed into the first. If the second term  is non-zero, it is useful to observe that it can be bounded from above by
\[
\lesssim_\tau R^{s-\delta} \mups(B(z,R)).
\]
This uses Theorem \ref{theoretical} and then sums the truncated geometric series. In particular, this bound would dominate the first term if it were obtained.  Moreover, we may use our results in Theorems \ref{intermediate} and \ref{proximity} to estimate the second term if the assumptions are satisfied. 

 The third term 
\[
 \size{H_{p'}}^s
\]
may again drop out.  However, if it is present, then we can bound it by
\[
R^s \lesssim_\tau  \size{H_{p'}}^s \leq  1
\]
and these bounds clearly cannot be improved.  If the third term is present, then it will dominate the first term provided $s> \kmax$ and the theoretical upper bound for the second term provided $\delta > \kmax$.

 As a consequence of our global measure formula we obtain a `continuity type result' for $\mu_s$ at $s=\delta$.  Roughly speaking, it says that the $\mu_s$ measure of a small enough ball is uniformly comparable to the $\mu_\delta$ measure of the same ball up to at most an atom as $s \to \delta$.

\begin{cor} \label{cty1}
There exists a  uniform constant $A  \geq 1$ such that for all $z \in \lset$ and $R>0$   sufficiently small either
\[
A^{-1} \leq \liminf_{s \to \delta} \frac{\mu_s(B(z,R))}{ \mu_\delta(B(z,R))} \leq \limsup_{s \to \delta} \frac{\mu_s(B(z,R))}{ \mu_\delta(B(z,R))} \leq A
\]
or there exists a point $p \in P \cap B(z,R)$ and a weight $\alpha=\alpha(z,R,s)>0$ such that
\[
A^{-1} \leq \liminf_{s \to \delta} \frac{( \mu_s-\alpha \Delta_p)(B(z,R))}{ \mu_\delta(B(z,R))} \leq \limsup_{s \to \delta} \frac{( \mu_s-\alpha \Delta_p)(B(z,R))}{ \mu_\delta(B(z,R))} \leq A,
\]
where $\Delta_p$ is a unit point mass at $p$.
\end{cor}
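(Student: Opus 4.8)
\textbf{Proof proposal for Corollary \ref{cty1}.}

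The plan is to start from the global measure formula in Theorem \ref{Global2} applied at exponent $s$, and compare term-by-term with the $\mu_\delta$ measure of the same ball, using the fact that the range $CR^2 \leq \tau^k < R$ of exponents appearing in the second term and the choice of $p'$ in the third term are \emph{independent of $s$}: only the powers $R^{2(s-\delta)}$, $\tau^{ks}$, and $\size{H_{p'}}^s$ depend on $s$, and each of these converges to its value at $s=\delta$ as $s\to\delta$ (indeed the whole finite sum is a continuous function of $s$). First I would fix $z\in\lset$ and $R>0$ small enough that Theorem \ref{Global2} applies, and write $N_k = \#\{p\in P\cap B(z,R)\mid \tau^{k+1}\leq \size{H_p}<\tau^k\}$ for the (finitely many, $s$-independent) counts. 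Then
\[
\mu_s(B(z,R)) \approx_\tau R^{2(s-\delta)}\mups(B(z,R)) \;+\; \sum_{k:\, CR^2\leq \tau^k<R} N_k\,\tau^{ks} \;+\; \size{H_{p'}}^s,
\]
and taking $s\to\delta$ the right-hand side converges to $\mups(B(z,R)) + \sum_k N_k \tau^{k\delta} + \size{H_{p'}}^\delta$. The implicit constants in $\approx_\tau$ are uniform (depending only on $\Gamma$ and the fixed $\tau$), so both $\liminf$ and $\limsup$ of $\mu_s(B(z,R))$ are comparable, with a uniform constant, to this limiting quantity.

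The key dichotomy is whether the third term $\size{H_{p'}}^\delta$ is ``negligible'' relative to $\mups(B(z,R))$ or not, and similarly for the second-term contribution. Here I would use Theorem \ref{theoretical}: the second term is bounded above by $\lesssim_\tau R^{\delta-\delta}\mups(B(z,R)) \cdot(\text{summed geometric series}) \lesssim_\tau \mups(B(z,R))$ when evaluated at $s=\delta$ — more precisely at exponent $s$ near $\delta$ the second term is $\lesssim_\tau R^{s-\delta}\mups(B(z,R))$, which tends to $\mups(B(z,R))$ up to constants — so the second term never beats the first by more than a uniform constant and can be absorbed. The first term at $s=\delta$ is exactly $\mups(B(z,R))$. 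Hence
\[
\mups(B(z,R)) \;\lesssim\; \lim\text{(RHS at }s\to\delta) \;\lesssim\; \mups(B(z,R)) + \size{H_{p'}}^\delta .
\]
If $\size{H_{p'}}^\delta \lesssim \mups(B(z,R))$ (in particular if $p'$ does not exist, when the third term is $0$), we are in the first case of the Corollary with $A$ the resulting uniform constant. Otherwise, set $p=p'$ and, for each $s$, let $\alpha=\alpha(z,R,s)$ be the weight of the atom of $\mu_s$ at $p'$, so that $(\mu_s-\alpha\Delta_{p'})(B(z,R)) = \mu_s(B(z,R)) - \mu_s(\{p'\})$; since $\mu_s(\{p'\})\approx \size{H_{p'}}^s$ is precisely (a uniform constant times) the third term, subtracting it leaves $\mu_s(B(z,R))-\mu_s(\{p'\}) \approx_\tau R^{2(s-\delta)}\mups(B(z,R)) + \sum_k N_k\tau^{ks}$, which as $s\to\delta$ is comparable to $\mups(B(z,R))$ by the bound on the second term above. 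That gives the second alternative.

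The main obstacle is bookkeeping the constants so that a \emph{single} uniform $A$ works in both alternatives and for all $z,R$: one has to check that the $\approx_\tau$ constant in Theorem \ref{Global2}, the $\lesssim_\tau$ constant from Theorem \ref{theoretical} bounding the truncated geometric series, and the comparison constant in $\mu_s(\{p'\})\approx \size{H_{p'}}^s$ all combine into one constant depending only on $\Gamma$ and the fixed $\tau$ (which is itself fixed once and for all). A minor subtlety is that the second term is a sum of $O(1)$ many $k$'s (the range $CR^2\leq\tau^k<R$ has length $\asymp \log(1/R)$ — actually this is \emph{not} $O(1)$), so to absorb it one genuinely needs the geometric-series bound from Theorem \ref{theoretical} rather than a trivial term-count; I would make sure to invoke exactly that bound, $\sum_{k:\,CR^2\leq\tau^k<R} N_k\tau^{ks}\lesssim_\tau R^{s-\delta}\mups(B(z,R))$, and note $R^{s-\delta}\to 1$ as $s\to\delta$. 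With these constants pinned down, taking $\liminf$ and $\limsup$ through the (finite, continuous-in-$s$) right-hand side of the global measure formula completes the argument.
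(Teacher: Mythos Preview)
Your proposal is correct and follows essentially the same approach as the paper: both arguments apply Theorem \ref{Global2}, absorb the middle term into $R^{s-\delta}\mu_\delta(B(z,R))$ via Theorem \ref{theoretical}, and then let $s\to\delta$. The paper's write-up is terser and uses the cleaner dichotomy of whether or not there exists $p\in P\cap B(z,R)$ with $|H_p|>2R$ (rather than your comparison of $|H_{p'}|^\delta$ against $\mu_\delta(B(z,R))$, which as stated is not a sharp condition); note also that to justify the subtraction step rigorously you should appeal to the decomposition \eqref{purely} in the proof of Theorem \ref{Global2} (so that $\mu_s(B(z,R))-\mu_s(\{p'\})$ is \emph{exactly} the first two sums) rather than to the statement of Theorem \ref{Global2} itself, since $\approx$ does not behave well under subtraction.
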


\begin{proof}
By Theorem \ref{Global2}
\[
R^{2(s-\delta)} \mups(B(z,R)) \lesssim \mu_s(B(z,R)) \lesssim R^{ s-\delta} \mups(B(z,R))
\]
provided there does not exist $p \in B(z,R) \cap P$ with $|H_p| >2 R$.  In this case the first  result holds.  However, if there   does   exist $p \in B(z,R) \cap P$ with $|H_p| >2 R$, then there is only one such $p$ and
\[
R^{2(s-\delta)} \mups(B(z,R)) \lesssim (\mu_s-\mu_s(\{p\}) \Delta_p) (B(z,R)) \lesssim R^{ s-\delta} \mups(B(z,R))
\]
and the second  result holds with $\alpha = \mu_s(\{p\})$.
\end{proof}

\subsection{Application: dimension theory of conformal measures}

For a more detailed analysis on the dimensions of measures, we refer the reader to \cite{techniques, fraserbook}. We write $\haus$ for the Hausdorff dimension of a set or measure, but omit the definition.  Let $\mu$ denote a finite compactly supported Borel measure on $\mathbb{R}^d$, and let $F = \text{supp}(\mu) = \{x \in \mathbb{R}^d \mid \mu(B(x,r)) > 0 \ \text{for all} \ r>0 \}$ denote the \textit{support} of $\mu$. The \textit{upper box dimension of $\mu$} is given by
\begin{align*}
\ubox \mu = \inf \Big\{ s \mid  \exists C>0 \ : \ \forall \ 0<r< \vert F \vert \  :  \ \forall x \in F \ : \  \mu(B(x,r)) \geq Cr^{s} \Big\} 
\end{align*}
and the \textit{lower box dimension of $\mu$} is given by
\begin{align*}
\lbox \mu = \inf \Big\{ s \mid \exists  C>0 \  : \ \forall  r_0>0 \ : \  \exists \ 0<r<r_0 \ :\ \forall \ x\in F \ :  \  \mu(B(x,r)) \geq Cr^{s} \Big\}.  
\end{align*}
If $\ubox \mu = \lbox \mu$, then we write $\boxd \mu$ to denote the common value and call it the \textit{box dimension of $\mu$}. The \textit{Assouad dimension of $\mu$} is defined by
\begin{align*}
\aso \mu = \inf \Bigg\{ s \geq 0 \mid  \exists C>0 \ : \ \forall \ 0<r<R< \vert F \vert \  :  \ \forall x \in F \ : \ \frac{\mu(B(x,R))}{\mu(B(x,r))} \leq C \left(\frac{R}{r} \right)^{s} \Bigg\}.
\end{align*}
The Assouad spectrum of $\mu$ attempts to understand the gap between the box and Assouad dimensions of a measure.  For $\theta \in (0,1)$,  the \textit{Assouad spectrum of $\mu$} is defined by
\begin{align*}
\asospec \mu = \inf \Bigg\{ s \geq 0 \mid  \exists C>0 \ : \ \forall \ 0<r< \vert F \vert \  :  \ \forall x \in F  \ : \ \frac{\mu(B(x,r^\theta))}{\mu(B(x,r))} \leq C \left(\frac{r^\theta}{r} \right)^{s} \Bigg\}. 
\end{align*}
The Assouad spectrum is continuous in $\theta$.  Moreover, the limit as $\theta \to 1$ is known to exist, and is referred to as the \textit{quasi-Assouad dimension}, denoted by $\qaso \mu$, and the limit as  $\theta \to 0$ is the upper box dimension of $\mu$.   We can relate the above notions of dimension as follows:  
\[\haus \mu \leq \lbox \mu \leq \ubox \mu \leq \asospec \mu \leq \qaso \mu \leq \aso \mu\]
 for all $\theta \in (0,1)$.

We are interested in the dimension theory of the $s$-conformal measures $\mu_s$.  Certain dimensions are not well-suited to this problem.  For example, it is immediate using the fact that $\mu_s$ is purely atomic for $s>\delta$ that $\mu_s$ is exact dimensional with dimension 0 and so the Hausdorff, packing and entropy dimensions of $\mu_s$ are 0 for all $s>\delta$.  Moreover, the Assouad dimension of $\mu_s$ is $\infty$ for all $s>\delta$. This is less trivial, but it follows from   a result of Kaufman and Wu \cite[Lemma 2]{kaufmanjang}  that $\mu_s$ is not doubling, which gives $\aso \mu_s = \infty$ (see \cite[Lemma 3.2]{jarvenpaapacking}).  The lower dimension and lower spectrum, which are natural duals to the Assouad dimension and Assouad spectrum, are also not well-suited.  In fact, any measure with an atom immediately has lower spectrum and lower dimension identically 0. In contrast, the box dimension and Assouad spectrum are perfectly suited to studying $\mu_s$.

\begin{thm}\label{dimensions}
For all $s> \delta$,
\begin{align*}
\max\{s, 2s-\kmin\} \leq  \normalfont{  \lbox} \mu_s  \leq  \normalfont{  \ubox} \mu_s \leq  \max\{2s-\delta, 2s-\kmin\} 
\end{align*}
Moreover, for $\theta \in (0,1/2]$, 
\begin{align*}
\frac{\max\{s, 2s-\kmin\}}{1-\theta} \leq  \normalfont{ \asospec} \mu_s \leq  \frac{\max\{2s-\delta, 2s-\kmin\}}{1-\theta}
\end{align*}
and for $\theta \in ( 1/2, 1)$,
\begin{align*}
\frac{ \max\{2\theta (s-\kmin)+\kmin, 2s-\kmin\} }{1-\theta} \leq  \normalfont{ \asospec} \mu_s &\leq  \frac{\max\{2s-\delta, 2s-\kmin\}}{1-\theta}.
\end{align*}
In particular,   $\normalfont{ \qaso} \mu_s    = \infty$. 
\end{thm}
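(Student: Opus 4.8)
The plan is to prove Theorem \ref{dimensions} by establishing the box dimension bounds first, then bootstrapping to the Assouad spectrum using the global measure formula (Theorem \ref{Global2}), and finally deducing $\qaso \mu_s = \infty$ by letting $\theta \to 1$. Throughout, the key structural input is that $\mu_s(\{p\}) \approx |H_p|^s$ for $p \in P$, combined with Theorem \ref{localvelani} for small horoballs and Theorem \ref{theoretical} as a universal upper bound. The global measure formula decomposes $\mu_s(B(z,R))$ into three pieces: a ``power-law'' term $R^{2(s-\delta)}\mu_\delta(B(z,R))$, an intermediate sum, and a possible large atom $|H_{p'}|^s$.

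\textbf{Box dimension bounds.} For the \emph{upper} bound on $\ubox \mu_s$, I would use Theorem \ref{Global2} together with the observation (made in the remarks) that the intermediate term is $\lesssim_\tau R^{s-\delta}\mu_\delta(B(z,R))$ and the atomic term is $\lesssim 1$ with $\gtrsim R^s$. Inserting the global measure formula \eqref{GlobThm} for $\mu_\delta(B(z,R)) \approx R^\delta e^{-\rho(\delta - \mathbf{k})}$ and bounding $e^{-\rho(\delta-\mathbf{k})} \gtrsim R^{\mathbf{k}-\delta} \gtrsim R^{\kmin - \delta}$ in the worst case (when $z$ sits deep in a rank-$\kmin$ horoball), one gets $\mu_\delta(B(z,R)) \gtrsim R^{\kmin}$, so the first term is $\gtrsim R^{2s - \delta}$ and $\gtrsim R^{2s - \kmin}$ according to which regime dominates; the atomic term contributes the competing lower bound $R^s$. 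Taking the infimum over exponents $t$ for which $\mu_s(B(z,R)) \gtrsim R^t$ holds uniformly gives $\ubox \mu_s \le \max\{2s-\delta, 2s-\kmin\}$. For the \emph{lower} bound $\lbox \mu_s \ge \max\{s, 2s-\kmin\}$, I exhibit specific points witnessing slow decay: near a parabolic point $p$ with $\mathbf{k}(p) = \kmin$, the ball $B(p,R)$ contains the atom at $p$ of mass $\approx |H_p|^s$, giving the bound $s$ when $R \approx |H_p|$, while choosing $R$ small compared to $|H_p|$ and using that $\mu_s(B(p,R)) \gtrsim |H_p|^s$ as long as $B(p,R)$ still captures the atom, we get $\mu_s(B(p,R)) \gtrsim R^{-\text{(something)}}|H_p|^s$ — more carefully, one needs $\mu_s(B(p,R))/R^t \to \infty$, and optimising the relationship $R \approx |H_p|^{1/2}$-ish between scale and horoball size produces the exponent $2s - \kmin$. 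I would be careful that the lower box dimension requires the bound to hold along \emph{some} sequence of radii for \emph{every} point, so the worst point governs it.

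\textbf{Assouad spectrum.} Here I fix $\theta \in (0,1)$, set $r = R^{1/\theta}$ (so $R = r^\theta$), and estimate the ratio $\mu_s(B(x,R))/\mu_s(B(x,r))$. The numerator is handled by the upper bound above; the denominator needs a \emph{lower} bound on $\mu_s(B(x,r))$, for which the cleanest tool is simply $\mu_s(B(x,r)) \gtrsim r^{\max\{2s-\delta,\,2s-\kmin\}}$ if $x$ is chosen adversarially — but to \emph{maximise} the ratio (which is what the Assouad spectrum sees), I instead want $\mu_s(B(x,r))$ as \emph{small} as possible while $\mu_s(B(x,R))$ is as large as possible. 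The extremal configuration: take $x = p$ a parabolic point of rank $\kmin$; then $\mu_s(B(p,r)) \approx |H_p|^s$ for all $r \lesssim |H_p|$ (the atom dominates and nothing else is nearby, by Theorem \ref{proximity}), so the denominator is \emph{constant} in $r$ once $r$ is below $|H_p|$, while the numerator $\mu_s(B(p,R))$ grows as $R$ increases up to $\approx |H_p|^{1/2}$ where new horoballs enter. Writing $|H_p| \approx r^{2\theta}$-type relations and tracking the two regimes $\theta \le 1/2$ versus $\theta > 1/2$ — in the first regime $R = r^\theta$ is large enough that the ``square of the scale'' threshold $R^2 \approx r^{2\theta}$ sits at or above the atom size, in the second it sits below — produces the stated piecewise formula, with the $1/(1-\theta)$ factor emerging from converting the $r$-to-$r^\theta$ ratio exponent into an $r^\theta/r$ ratio exponent (the standard Assouad-spectrum reparametrisation). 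For the upper bound on $\asospec \mu_s$ I just combine the uniform upper bound on the numerator with the uniform lower bound on the denominator ($\mu_s(B(x,r)) \gtrsim r^{\max\{2s-\delta, 2s-\kmin\}}$, itself a consequence of the global measure formula), which immediately yields $\asospec \mu_s \le \max\{2s-\delta, 2s-\kmin\}/(1-\theta)$.

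\textbf{Quasi-Assouad dimension.} Letting $\theta \to 1$ in the lower bound: in the regime $\theta > 1/2$ the lower bound is $(2\theta(s-\kmin)+\kmin)/(1-\theta)$, and since $s > \delta \ge \kmin$ the numerator tends to $2(s-\kmin)+\kmin = 2s-\kmin > 0$ while the denominator tends to $0$, so the whole expression diverges; hence $\qaso \mu_s = \lim_{\theta\to 1}\asospec \mu_s = \infty$. \textbf{The main obstacle} I anticipate is the lower bound on the Assouad spectrum in the regime $\theta > 1/2$: one must produce, for every small $r$, a point $x$ and verify that $\mu_s(B(x,r^\theta))$ is genuinely large relative to $\mu_s(B(x,r))$, which requires invoking Theorem \ref{intermediate} (not just the crude counting) to guarantee that enough intermediate horoballs of the right size actually populate $B(x, r^\theta)$ — and Theorem \ref{intermediate} comes with a constraint relating $|x - p_0|$ to both scales, so one must check that the extremal configuration (e.g. $x = f^n(z_0)$ approaching a parabolic point along a parabolic orbit, as explained after Theorem \ref{intermediate}) can be tuned to satisfy $c_1\tau^{k/2} \le |x - p_0| \le c_2\sqrt{R|H_{p_0}|}$ for the relevant $k, R$. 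Handling this interaction between the two scales and the position of $x$, uniformly over small $r$, is where the real work lies; the rest is bookkeeping with the three terms of the global measure formula.
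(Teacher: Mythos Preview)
Your proposal has the right scaffolding but contains a genuine directional error in the lower bound for $\lbox \mu_s$. With the paper's definitions, showing $\lbox \mu_s \geq t$ requires producing, for every small $r$, a centre $x = x(r)$ at which the measure is \emph{small}: $\mu_s(B(x,r)) \lesssim r^t$. Your argument instead exhibits points where the measure is \emph{large} (a parabolic $p$ whose atom forces $\mu_s(B(p,r)) \gtrsim |H_p|^s$), which is the wrong direction entirely. The paper does the opposite of what you describe: for the bound $2s-\kmin$ it chooses $z_n \to p$ (rank $\kmin$) with $R_n \approx |z_n - p|$ so that $B(z_n,R_n)$ narrowly \emph{misses} the atom at $p$ while sitting deep in the cusp, and then Theorem~\ref{Global2}, Theorem~\ref{proximity} and \eqref{GlobThm} give $\mu_s(B(z_n,R)) \lesssim R^{2s-\kmin}$; for the bound $s$ it takes a sequence of parabolic points $p_n$ with $|H_{p_n}| \approx R_n$, so the only atom in $B(p_n,R_n)$ has mass $\approx R_n^s$.

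The Assouad spectrum lower bounds inherit the same issue. Your configuration ``centre at a fixed parabolic $p$'' cannot yield an unbounded ratio, since both $\mu_s(B(p,r))$ and $\mu_s(B(p,r^\theta))$ tend to the constant $\mu_s(\{p\})$ as $r \to 0$. The paper instead uses \emph{sequences} of centres: for $(2s-\kmin)/(1-\theta)$ it takes $z_n \to p$ with $R_n < |z_n-p| < R_n^\theta$, so the small ball misses the atom at $p$ but the large ball captures it; for $s/(1-\theta)$ and $(2\theta(s-\kmin)+\kmin)/(1-\theta)$ it takes $p_n = f^n(p)$ along a parabolic orbit toward a fixed point $p'$, tuning $R_n$ so that $B(p_n,R_n^\theta)$ swallows the large atom at $p'$ while $\mu_s(B(p_n,R_n))$ is controlled via \eqref{GlobThm} and a direct estimate on $\rho(p_n,-\log R_n)$. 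None of this invokes Theorem~\ref{intermediate}, so your anticipated obstacle does not arise. Finally, your claim ``$s > \delta \geq \kmin$'' is false in general (the paper explicitly allows $\delta < \kmin$); $\qaso \mu_s = \infty$ still follows, but from the bound $(2s-\kmin)/(1-\theta)$, using only $2s > 2\delta > \kmax \geq \kmin$.
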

We defer the proof of Theorem \ref{dimensions} until Section \ref{proofdimensions}. Note that the above estimates become precise formulae provided $\delta \geq \kmin$.
\begin{cor}
If $s > \delta \geq \kmin$, then
\begin{align*}
\normalfont{\lbox} \mu_s = \normalfont{\ubox} \mu_s &= 2s-\kmin \\
\text{and } \hspace{1.2cm} \normalfont{\asospec} \mu_s &= \frac{2s-\kmin}{1-\theta}
\end{align*}
for all $\theta \in (0,1)$.
\end{cor}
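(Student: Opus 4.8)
The plan is to deduce the corollary directly from Theorem \ref{dimensions} by simplifying the maxima in its upper and lower bounds under the extra hypothesis $\delta \geq \kmin$, using also that $s > \delta$ forces $s > \kmin$. There is no new geometry or measure theory needed here; all the content lies in Theorem \ref{dimensions} itself, whose proof is deferred to Section \ref{proofdimensions}.

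First I would record the two elementary facts that make everything collapse. Since $\delta \geq \kmin$ we have $2s - \delta \leq 2s - \kmin$, hence $\max\{2s-\delta,\,2s-\kmin\} = 2s - \kmin$; this immediately pins down the upper bound in all three ranges of $\theta$, as well as the upper bound for $\ubox \mu_s$. Since $s > \delta \geq \kmin$ we have $s - \kmin > 0$, so $s < 2s - \kmin$ and therefore $\max\{s,\,2s-\kmin\} = 2s - \kmin$; this identifies the lower bound of Theorem \ref{dimensions} for $\lbox \mu_s$ and for $\asospec \mu_s$ when $\theta \in (0,1/2]$ as $2s-\kmin$ and $(2s-\kmin)/(1-\theta)$ respectively, matching the upper bounds.

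For the remaining range $\theta \in (1/2,1)$ I would check that the lower bound in Theorem \ref{dimensions} also degenerates to a single term: since $0 < \theta < 1$ and $s - \kmin > 0$, we get $2\theta(s-\kmin) < 2(s-\kmin)$, i.e. $2\theta(s-\kmin) + \kmin < 2s - \kmin$, so $\max\{2\theta(s-\kmin)+\kmin,\,2s-\kmin\} = 2s-\kmin$ on this range too. Combining the cases, for every $\theta \in (0,1)$ both the lower and the upper bound of Theorem \ref{dimensions} equal $(2s-\kmin)/(1-\theta)$, giving $\asospec \mu_s = (2s-\kmin)/(1-\theta)$; and both box-dimension bounds equal $2s-\kmin$, so since $\lbox \mu_s \leq \ubox \mu_s$ always holds the box dimension exists and equals $2s-\kmin$.

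I do not expect any real obstacle: the corollary is a formal consequence of Theorem \ref{dimensions}, and the only thing to verify carefully is the (routine) claim that each of the three ``$\max$'' expressions reduces to its second argument precisely because $\delta \geq \kmin$ and $s > \kmin$. If anything needs emphasis, it is simply that the hypothesis $s > \delta$ is what guarantees $s - \kmin > 0$, which is used both to simplify $\max\{s,2s-\kmin\}$ and to simplify the $\theta$-dependent lower bound.
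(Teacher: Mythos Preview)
Your proposal is correct and is exactly what the paper intends: the corollary is stated immediately after Theorem \ref{dimensions} as the observation that ``the above estimates become precise formulae provided $\delta \geq \kmin$'', with no separate proof given. Your case-by-case reduction of each maximum to $2s-\kmin$ is precisely the intended (and only) argument.
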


We do not know if the bounds from Theorem \ref{dimensions} are sharp in the case $\delta<\kmin$.  It would be interesting to consider this further. We established that $\qaso \mu_s = \infty$, but it would be interesting to know whether or not this holds for arbitrary  purely atomic measures with perfect support.  Indeed, it follows from the result of  Kaufman and Wu that $\aso \nu = \infty$ for such measures $\nu$.

\begin{ques}
Let $(X,\sigma)$ be a compact metric space and $\nu$ be a Borel measure on $X$ with $\normalfont{\qaso} \nu < \infty$.  Is it true that $\nu(\{x\}) = 0$ for every accumulation point $x$ in the support of $\nu$?
\end{ques}

As discussed above, we are especially interested in   `continuity properties' of $\mu_s$ at $s=\delta$.    One approach to this problem is to check if various fractal dimensions vary continuously at $s=\delta$. This fails in a very dramatic way for the Hausdorff,  Assouad and quasi-Assouad dimensions for example since $\haus \mu_\delta= \delta>0$ and $\aso \mu_\delta = \qaso \mu_\delta = \max\{\kmax, 2\delta-\kmin\}<\infty$. For discussion regarding the various dimensions of the Patterson-Sullivan measure, we refer the reader to \cite{fraserregularity,fraserstuart}. More interestingly,  for $\theta \in (0,1/2]$, 
 \[
\liminf_{s \to \delta} \normalfont{ \asospec} \mu_s  \geq   \frac{\max\{ \delta, 2\delta-\kmin\}}{1-\theta}  > \normalfont{ \asospec } \mu_\delta
 \]
 and so $ \normalfont{ \asospec} \mu_s$ is \emph{not} continuous at $s=\delta$, despite being positive and finite for all $s \geq \delta$.  However,
\[
\normalfont{  \ubox} \mu_s    \to \max\{  \delta, 2\delta-\kmin\}  = \normalfont{  \boxd} \mu_\delta
\]
and
\[
\normalfont{  \lbox} \mu_s    \to \max\{  \delta, 2\delta-\kmin\}  = \normalfont{  \boxd} \mu_\delta
\]
as $s \to \delta$ and so we get another continuity type result, this time cast in the language of dimension.

\begin{cor} \label{cty2}
The functions $s \mapsto \normalfont{  \ubox} \mu_s $ and  $s \mapsto \normalfont{  \lbox} \mu_s $ defined for $s \in [\delta, \infty)$   are continuous at $s=\delta$. 
If $\delta \geq \kmin$, then  $s \mapsto \normalfont{  \boxd} \mu_s $ is well-defined and affine for all  $s \in [\delta, \infty)$.
\end{cor}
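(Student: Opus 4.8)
The plan is to read both assertions directly off Theorem \ref{dimensions}, its subsequent Corollary, and the known value $\boxd \mu_\delta = \max\{\delta, 2\delta - \kmin\}$ of the box dimension of the Patterson-Sullivan measure.

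For the continuity statement, fix $s>\delta$ and invoke Theorem \ref{dimensions} to obtain the sandwich
\[
\max\{s,\, 2s-\kmin\} \;\leq\; \lbox \mu_s \;\leq\; \ubox \mu_s \;\leq\; \max\{2s-\delta,\, 2s-\kmin\}.
\]
Both the left- and right-hand bounds are continuous in $s$, and as $s \to \delta^+$ they converge to the \emph{common} value $\max\{\delta,\, 2\delta-\kmin\}$ (using $2\delta-\delta=\delta$ for the upper bound). By the squeeze principle, $\lbox \mu_s$ and $\ubox \mu_s$ both tend to $\max\{\delta,\, 2\delta-\kmin\}$ as $s \to \delta^+$. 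It then remains only to note that $\lbox \mu_\delta = \ubox \mu_\delta = \max\{\delta,\, 2\delta-\kmin\}$, which is a known property of the Patterson-Sullivan measure (see \cite{fraserregularity,fraserstuart}) and is consistent with the global measure formula \eqref{GlobThm}. Since the one-sided limit at the left endpoint $s=\delta$ of the domain $[\delta,\infty)$ agrees with the value at $s=\delta$, the functions $s \mapsto \ubox \mu_s$ and $s \mapsto \lbox \mu_s$ are continuous at $s=\delta$.

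For the second statement, assume $\delta \geq \kmin$. The Corollary immediately following Theorem \ref{dimensions} gives $\lbox \mu_s = \ubox \mu_s = 2s-\kmin$ for every $s>\delta$, so $\boxd \mu_s$ is well-defined and equal to $2s-\kmin$ on $(\delta,\infty)$. At $s=\delta$ the hypothesis $\delta \geq \kmin$ forces $2\delta-\kmin \geq \delta$, whence $\max\{\delta,\, 2\delta-\kmin\}=2\delta-\kmin$; combined with the equality $\lbox \mu_\delta = \ubox \mu_\delta = \max\{\delta, 2\delta-\kmin\}$ used above, this shows $\boxd \mu_\delta$ is well-defined and also equals $2\delta-\kmin$. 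Hence $\boxd \mu_s = 2s-\kmin$ for all $s \in [\delta,\infty)$, which is an affine function of $s$.

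I do not anticipate a genuine obstacle: the corollary is a formal consequence of Theorem \ref{dimensions} and its corollary once the box dimension of $\mu_\delta$ is known. The only point requiring mild care is to use that \emph{both} the lower and upper box dimensions of $\mu_\delta$ equal $\max\{\delta, 2\delta-\kmin\}$ — not merely one of them — so that the squeeze argument genuinely identifies a limit matching the value at $s=\delta$; this is precisely what the cited references on the Patterson-Sullivan measure record.
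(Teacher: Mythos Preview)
Your proposal is correct and follows essentially the same approach as the paper: the paper records, immediately before stating Corollary~\ref{cty2}, that Theorem~\ref{dimensions} forces $\ubox \mu_s$ and $\lbox \mu_s$ to converge to $\max\{\delta,2\delta-\kmin\} = \boxd \mu_\delta$ as $s\to\delta$, and the affine statement is read off the corollary to Theorem~\ref{dimensions} together with the value at $s=\delta$. Your write-up simply makes these deductions explicit.
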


\section{Remaining proofs}\label{proofs}

\subsection{Proof of Theorem \ref{localvelani}} \label{prooflocalvelani}
Fix $\tau \in (0,1)$ which later we will see must be chosen sufficiently small depending only on the group $\Gamma$. Let $C_1R^{2}>r>0$, where the constant $C_1 \in (0,1)$ is to be determined, and let $z \in \lset$. We first recall that due to \cite[Theorem 1]{stratmannvelani}, there exists a constant $\kappa>0$ such that for sufficiently small $r>0$, 
\begin{equation} \label{squeezecover} \lset \subseteq \bigcup\limits_{\substack{p \in P: \\ {\size{H_p}\geq r}}} \Pi\left(\kappa \sqrt{\frac{r}{\size{H_p}}}H_p\right)
\end{equation}
with multiplicity $\lesssim 1$.   Here $\Pi$ denotes the radial projection from $\mathbf{0}$ onto the boundary $\mathbb{S}^d$ and $\lambda H_p$ ($\lambda>0$) denotes the `squeezed horoball' tangent to the boundary at $p$ but with diameter    $\lambda |H_p|$.  For notational convenience, we write $\lambda_p = \kappa\sqrt{r/{\size{H_p}}}$.  Therefore, for sufficiently small $R$, we have
\[\mups(B(z,R)) \lesssim \sum\limits_{\substack{p \in P\cap B(z,R): \\ {r \leq \size{H_p}<R}}} \mups\left(\Pi\left(\lambda_p H_p\right)\right) \ + 
\sum\limits_{\substack{p \in P: \\ \Pi(\lambda_p H_p)\cap B(z,R) \neq \emptyset \\ {R\leq\size{H_p}}}} \mups\left(\Pi\left(\lambda_p H_p \right) \cap B(z,R)\right).
\]
A simple disjointness argument shows that the number of $p \in P$ satisfying the conditions of the second sum must be $\leq K_d$, where $K_d$ is a constant dependant only on $d$. Furthermore, note that for all $p \in P$, we have $\size{\Pi\left(\lambda_p H_p \right)} \lesssim \sqrt{r} < \sqrt{C_1}R$, and so we can choose $C_1$ dependant on the implied constants such that $\size{\Pi\left(\lambda_p H_p \right)}<R$. 

The lower dimension, denoted by $\low$, is a notion dual to the Assouad dimension.  We refer to \cite{fraserbook} for the definition of lower dimension, but for our purposes we use that when the lower dimension of a measure is positive, one can effectively bound from below the ratio of the measure of concentric balls.  In particular, it was shown in \cite{fraserregularity} that $\low \mups>0$. Therefore, by choosing $p \in P$ satisfying the conditions of the second sum and an appropriate $z' \in \Pi\left(\lambda_p H_p \right) \cap B(z,R)$, for fixed $0 < \epsilon < \low\mups$, we have
\begin{align*}
\frac{\mups(B(z,R))}{\mups\left(\Pi\left(\lambda_p H_p \right) \cap B(z,R)\right)} \gtrsim \frac{\mups(B(z',R))}{\mups\left(\Pi\left(\lambda_p H_p \right) \cap B(z,R)\right)}
&\gtrsim \left(\frac{R}{\size{\Pi\left(\lambda_p H_p \right)}}\right)^{\low\mups-\epsilon} \\
& \gtrsim \left(\frac{1}{\sqrt{C_1}}\right)^{\low\mups-\epsilon} 
\end{align*}
where we have used the fact that $\mups$ is doubling. In particular, we can also ensure $C_1$ is chosen (dependant on the implied constants) sufficiently small to ensure
\begin{equation}\label{lowerestimate}
\frac{\mups(B(z,R))}{\mups\left(\Pi\left(\lambda_p H_p \right) \cap B(z,R)\right)} \geq 100K_d.
\end{equation}
Also, using the multiplicity in the cover \eqref{squeezecover} and the fact that $\mups$ is doubling, we get
\[\mups(B(z,R)) \gtrsim \mups(B(z,2R)) \gtrsim \sum\limits_{\substack{p \in P\cap B(z,R): \\ {r \leq \size{H_p}<R}}} \mups\left(\Pi\left(\lambda_p H_p\right)\right). \]
Combining this with (\ref{lowerestimate}) gives
\begin{align*}
\sum\limits_{\substack{p \in P\cap B(z,R) : \\ {r \leq \size{H_p}<R}}} \mups\left(\Pi\left(\lambda_p H_p\right)\right) \lesssim \mups(B(z,R)) 
&\lesssim \sum\limits_{\substack{p \in P\cap B(z,R): \\ {r \leq \size{H_p}<R}}} \mups\left(\Pi\left(\lambda_p H_p\right)\right) \ + \  \mups(B(z,R))/100
\end{align*}
and so, using \cite[Corollary 3.5]{stratmannvelani} which gives a fomula for the $\mu_\delta$ measure of the radial projection of a squeezed horoball,  we have
\begin{equation}\label{counthoro}
 \mups(B(z,R))  \approx \sum\limits_{\substack{p \in P\cap B(z,R): \\ {r \leq \size{H_p}<R}}} \mups\left(\Pi\left(\lambda_p H_p\right)\right) \approx r^{\delta} \sum\limits_{\substack{p \in P\cap B(z,R): \\ {r \leq \size{H_p}<R}}} \left(\frac{\size{H_p}}{r}\right)^{\mathbf{k}(p)/2}.
\end{equation}
With this estimate in place, we now follow the argument from \cite[Proof of Theorem 3]{stratmannvelani}. We   wish to consider $0<r<C_1 C_2 R^2$, where $C_2$ is some constant to be determined. Explicitly, (\ref{counthoro}) implies the existence of a constant $C_3\geq1$ such that
\begin{equation}\label{explicit}
 \mups(B(z,R))/C_3 \leq r^{\delta} \sum\limits_{\substack{p \in P\cap B(z,R): \\ {r \leq \size{H_p}<R}}} \left(\frac{\size{H_p}}{r}\right)^{\mathbf{k}(p)/2} \leq C_3\mups(B(z,R)).   
\end{equation}
Note that from our arguments above, $C_3$ can be chosen independently of $\tau,z,R$ and $r$, as none of the implied constants used in the derivation of (\ref{counthoro}) depend on these quantities. Furthermore, for $1 \leq \alpha < C_1R^{2}/r$,
\[\mups(B(z,R))/C_3 \leq (\alpha r)^{\delta} \sum\limits_{\substack{p \in P\cap B(z,R): \\ {\alpha r \leq \size{H_p}<R}}} \left(\frac{\size{H_p}}{\alpha r}\right)^{\mathbf{k}(p)/2} \leq C_3\mups(B(z,R))\]
which implies
\begin{align*}
\sum\limits_{\substack{p \in P\cap B(z,R) : \\ {r \leq \size{H_p}<\alpha r}}} \left(\frac{\size{H_p}}{r}\right)^{\mathbf{k}(p)/2} &= \sum\limits_{\substack{p \in P\cap B(z,R) : \\ {r \leq \size{H_p}<R}}} \left(\frac{\size{H_p}}{r}\right)^{\mathbf{k}(p)/2} - \sum\limits_{\substack{p \in P\cap B(z,R): \\ {\alpha r \leq \size{H_p}<R}}} \left(\frac{\size{H_p}}{r}\right)^{\mathbf{k}(p)/2} \\
&\geq r^{-\delta}(1/C_3-C_3\alpha^{\kmax/2-\delta})\mups(B(z,R)).
\end{align*}
We now choose $C_2$ to be small enough to ensure that we can make $\alpha$ sufficiently large, in particular, such that
\[\alpha^{-\kmax/2}(1/C_3-C_3\alpha^{\kmax/2-\delta})\geq C_4\]
is satisfied for some constant $C_4=C_4(\alpha)>0$. Recall here the well-known estimate $\delta>\kmax/2$.  Combined with (\ref{explicit}), we get
\[\sum\limits_{\substack{p \in P\cap B(z,R) : \\ {r \leq \size{H_p}<\alpha r}}} 1 \approx_\alpha r^{-\delta}\mups(B(z,R))\]
and the result follows provided we can choose $\alpha = 1/\tau$.  However, this will be possible for $\tau$ sufficiently small depending only on $C_3$ and other fixed constants.  Finally, $C$ is chosen (depending on $\tau$) to be $C=C_1C_2$.

\subsection{Proof of Theorem \ref{theoretical}} \label{prooftheoretical}
Let $\tau \in(0,1)$,    $z \in \lset$,  and  $R \in (0,1)$.  If   $k \in \mathbb{N}$ is such that $\tau^{k} \lesssim R$, then
\begin{align*}
\mups(B(z,R)) &\geq \sum_{  \substack{p \in P \cap B(z,R) :\\
 \tau^{k+1} \leq \size{H_p} < \tau^k} } \mups (B(p,\tau^{k+1}/10) \cap B(z,R) ) \\
 &\gtrsim_\tau
 \sum_{\substack{p \in P \cap B(z,R) :\\
 \tau^{k+1} \leq \size{H_p} < \tau^k} } \mups (B(p,\size{H_p}) \cap B(z,R) ) 
 \gtrsim_\tau \tau^{k\delta} \sum_{  \substack{p \in P \cap B(z,R) :\\
 \tau^{k+1} \leq \size{H_p} < \tau^k} } 1
 \end{align*}
and the result follows.  In the above we used \eqref{GlobThm}, that $\mups$ is doubling, and that, since  $\tau^{k} \lesssim R$,  we can ensure  that $B(p,\size{H_p}) \cap B(z,R)$ contains  a ball of radius $\gtrsim \size{H_p}$ for the $p$ we sum over. 

 If  $k \in \mathbb{N}$ is such that $\tau^{k+1} > 2R$, then since horoballs $H_p$ ($p \in P$) are pairwise disjoint, it is immediate that
\[\# \left\{p \in P \cap B(z,R) \mid \tau^{k+1} \leq \size{H_p} < \tau^k \right\} \leq 1.\]

\subsection{Proof of Theorem \ref{intermediate}} \label{proofintermediate}
We switch to the upper-half space model $\mathbb{H}^{d+1} = \mathbb{R}^d \times (0,\infty)$ and may assume that $p_0=\mathbf{0} \in P$. Then $\mathbf{0}$ has a set of $\mathbf{k}(\mathbf{0})$ parabolic elements  which all fix $\mathbf{0}$ and generate a free abelian group of rank $\mathbf{k}(\mathbf{0})$.  We conjugate $\mathbf{0}$ to $\infty$ by applying the isometry   $\iota$ which inverts in  the sphere centred at $\mathbf{0}$ with radius 1. Note that $\iota$ does not preserve orientation, but this is not an issue. Conjugating the parabolic elements fixing $\mathbf{0}$ by $\iota$ results in a collection $\{f_1 , \dots ,f_{\mathbf{k}(\mathbf{0})} \}$ of parabolic elements of the form
\[f_i(z) = A_iz+t_i\]
for some $t_i \in \mathbb{R}^d$ and some finite order orthogonal  matrix $A_i$  acting on the boundary  $\mathbb{R}^d$. (Formally this only defines $f$ on the boundary, but this is extended to $\mathbb{H}^{d+1} $ by $f(z,\omega)=(f(z),\omega)$.)  Further, by taking sufficiently large powers of the $f_i$, we may assume without loss of generality that they all take the form $f_i(z) = z+t_i$.   Moreover, we may assume that
\[
|t_i| \approx |H_{p_0}|^{-1}
\]
for all $i$, where the implicit constants may depend on the matrices $A_i$.  This can be seen by restricting to a 2-dimensional slice through $\mathbb{H}^{d+1}$ which is stabilised by $f_i$ (namely, the span of $t_i$ and a vector normal to the boundary) and then applying the following 2-dimensional argument.

\begin{lem}
Suppose $d=1$ and $0 \in P \subseteq \mathbb{R} \cup \{\infty\}$.  Then there exists $f \in \Gamma \leq \textup{PSL}(2, \mathbb{R})$ and  $\alpha > 0$ such that for all $z \in \mathbb{H}^2$, 
\[f(z) = \frac{z}{\alpha z+1} \]
where $\alpha \approx \size{H_{0}}^{-1}$. In particular, conjugating by the circle inversion $\iota$ given by $\iota(z) = 1/z$ gives $\iota f \iota^{-1} = z+\alpha$. 
\end{lem}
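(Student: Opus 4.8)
The plan is to reduce to an explicit computation in $\textup{PSL}(2,\mathbb{R})$ using the known structure of parabolic subgroups and the standard relationship between a parabolic fixed point, its associated horoball, and the conjugating inversion. First I would note that, since $0 \in P$, the stabiliser $\textup{Stab}(0) \leq \Gamma$ contains a parabolic element, and in $\textup{PSL}(2,\mathbb{R})$ every parabolic element fixing $0$ is of the form $z \mapsto z/(\alpha z + 1)$ for some $\alpha \in \mathbb{R} \setminus \{0\}$ (this is just the conjugate by $z \mapsto 1/z$ of a translation $w \mapsto w + \alpha$, which is the general parabolic fixing $\infty$). Replacing $f$ by $f^{-1}$ if necessary we may take $\alpha > 0$, and this gives the displayed form for $f$; the final sentence about $\iota f \iota^{-1} = z + \alpha$ is then an immediate two-line matrix computation with $\iota(z) = 1/z$.

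The substantive content is the comparison $\alpha \approx |H_0|^{-1}$. Here I would use the defining geometric property of the standard horoball $H_0$: it is a Euclidean disc tangent to $\mathbb{R}$ at $0$, so it has the form $H_0 = B((0,\varrho), \varrho)$ in $\mathbb{H}^2 = \mathbb{R}\times(0,\infty)$ with $|H_0| = 2\varrho$, and by the $\Gamma$-equivariance $g(H_p) = H_{g(p)}$ together with disjointness and the fact that the $H_p$ do not contain $\mathbf 0$, the radius $\varrho$ is comparable to the natural scale attached to $0$ by the group. Conjugating by $\iota(z) = 1/z$ sends $0$ to $\infty$ and sends $H_0$ to a horoball at $\infty$, i.e. a half-space $\{\omega > h\}$ for some height $h > 0$; a direct computation of how $\iota$ acts on the circle $|z - i\varrho| = \varrho$ shows $h = 1/(2\varrho) = |H_0|^{-1}$ (up to the universal constant coming from $|H_0| = 2\varrho$). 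On the other hand, after conjugation the parabolic $f$ becomes the translation $w \mapsto w + \alpha$, and the standard horoball at $\infty$ invariant under a cyclic parabolic group generated by $w \mapsto w + \alpha$ sits at a height comparable to $\alpha$ — this is precisely the content of the Stratmann–Velani construction of the standard horoballs (each is placed at the canonical scale determined by the parabolic subgroup), so $h \approx \alpha$. Combining, $\alpha \approx h = |H_0|^{-1}$.

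The main obstacle I anticipate is pinning down the comparison $h \approx \alpha$ rigorously rather than heuristically: it requires invoking the precise normalisation in the construction of the standard family $\{H_p\}$ from \cite{stratmannvelani, stratmannurbanski}, namely that the horoball at a parabolic point is chosen at the scale dictated by the (primitive) parabolic generator of its stabiliser, so that after moving the point to $\infty$ and the generator to $w \mapsto w + \alpha$ the invariant horoball has Euclidean height comparable to $\alpha$ with constants depending only on $\Gamma$. This is a known fact about the standard horoball packing, but it needs to be cited carefully. Everything else — the form of $f$, the conjugation identity, and the elementary computation $\iota(B(i\varrho,\varrho)) = \{\omega > 1/(2\varrho)\}$ — is routine. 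I would then remark that the higher-dimensional statement quoted just before the lemma follows by the slicing argument already indicated in the text: restrict to the $2$-plane spanned by $t_i$ and the normal direction, which is stabilised by $f_i$, and apply the lemma there, with the implicit constants absorbing the dependence on the orthogonal matrices $A_i$.
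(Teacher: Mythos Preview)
Your approach differs from the paper's at the crucial step, and the difference is exactly the point you flag as an obstacle. The paper does not conjugate by $\iota$ and cite a normalisation property of the horoball packing. Instead it works directly with a group element: since there are finitely many $\Gamma$-inequivalent parabolic points, one may write $0 = g(\infty)$ for some $g \in \Gamma$ of the form $g(z) = u/(z+v)$, with the cusp $\infty$ carrying a fixed horoball $\{\textup{Im}\,z > a\}$ and a fixed primitive parabolic $h(z) = z+b$ (both $a,b$ uniform constants). A one-line matrix computation gives $f := g h^{-1} g^{-1}(z) = z/((-b/u)z+1)$, so $\alpha = -b/u$, while $|H_0| = |g(H_\infty)|$ is computed explicitly as $-u/a$ by maximising $\textup{Im}(u/(z+v))$ over $\textup{Im}\,z \geq a$. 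Hence $\alpha \approx |H_0|^{-1}$ with constants $a,b$ depending only on the finitely many cusps.

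Your route --- push everything to $\infty$ by $\iota$, then invoke that the standard horoball at $\infty$ sits at height $\approx$ (translation length of the primitive parabolic) --- is morally correct but the ``known fact'' you want to cite is not how the Stratmann--Velani family is actually constructed: those horoballs are chosen at the cusp representatives by a disjointness condition, not by reference to the translation length. Establishing that the height and translation length are nonetheless comparable with $\Gamma$-uniform constants requires exactly the reduction to a finite set of cusp representatives and a computation of how both quantities transform under the conjugating map $g$ --- which is the paper's argument. So your proof defers the substantive content to a citation that does not quite contain it; to close the gap you end up reproducing the paper's computation. The remaining parts of your outline (the form of $f$, the conjugation identity, and the computation $\iota(H_0) = \{\omega > |H_0|^{-1}\}$) are fine.
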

\begin{proof}
We have $g(p') =0$ and  $H_{0} = H_{g(p')} = g(H_{p'})$  for one of the finitely many inequivalent parabolic points $p'$ and some $g \in \Gamma$. Since there are only finitely many  inequivalent parabolic points, we may assume that $p'=\infty$ and $H_{p'} = \{ x+ai : x \in \mathbb{R}\}$ for some uniform constant $a>0$. Since $g(\infty) = 0$,
\[
g(z) = \frac{u}{ z+v}
\]
for some $u, v \in \mathbb{R} \setminus\{0\}$ with $u<0$.  Moreover, $\infty$ is fixed by a parabolic element which we may assume is of the form $h(z) = z+b$ for some uniform constant $b>0$.   Then, by direct calculation, we have
\[
g h^{-1} g^{-1}(z) =    \frac{z}{ (-b/u)z+1}
\]
and so we can take $\alpha = -b/u$ and $f = gh^{-1}g^{-1}$.  Then
\begin{align*}
 \size{H_{0}} = \size{g(H_{p'})} = \sup_{z \in H_{p'}} \textup{Im} \left( \frac{u}{ z+v}\right) = \sup_{x \in \mathbb{R}}  \textup{Im}\left( \frac{u}{ x+ai+v}\right) = \sup_{x \in \mathbb{R}} \frac{-au}{(x+v)^2+a^2} = \frac{-u}{a} \approx 1/\alpha
\end{align*}
as required.
\end{proof}

Before proceeding, we also need a small modification of Theorem \ref{localvelani}.    
\begin{lem}\label{tauball}
Fix $C$   as in Theorem \ref{localvelani}.  For sufficiently small  $\tau \in (0,1)$,  all $z \in \lset$ and all sufficiently large $k \in \mathbb{N}$, we have
\[\# \left\{p \in P \cap B(z,(2/\sqrt{C})\tau^{k/2}) \mid 2\tau^{k+1} \leq \size{H_p} < \tau^k/2 \right\} \approx_\tau \tau^{-k\delta} \mups(B(z,\tau^{k/2})).\]
\end{lem}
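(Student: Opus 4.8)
The plan is to derive Lemma \ref{tauball} from Theorem \ref{localvelani} by a simple covering/packing argument, trading the ``square-scale'' constraint for the comparability of the ball radius to $\tau^{k/2}$. Note first that with $R = (2/\sqrt{C})\tau^{k/2}$ we have $\tau^k = (C/4) R^2 < CR^2$, so Theorem \ref{localvelani} is applicable at scale $R$ (for all sufficiently large $k$, so that $R$ is small). The statement of Theorem \ref{localvelani} with this $R$ gives
\[
\# \left\{p \in P \cap B(z,(2/\sqrt{C})\tau^{k/2}) \mid \tau^{k+1} \leq \size{H_p} < \tau^k \right\} \approx_\tau \tau^{-k\delta}\mups\big(B(z,(2/\sqrt{C})\tau^{k/2})\big),
\]
and since $\mups$ is doubling, the right-hand side is $\approx \tau^{-k\delta}\mups(B(z,\tau^{k/2}))$. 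So the only two differences between this and the claimed statement are: (i) the size window $[\tau^{k+1},\tau^k)$ versus $[2\tau^{k+1}, \tau^k/2)$, and (ii) a cosmetic change of the radius from $(2/\sqrt C)\tau^{k/2}$ to $\tau^{k/2}$.

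For the radius change: the upper bound direction is immediate since $B(z,\tau^{k/2}) \subseteq B(z,(2/\sqrt C)\tau^{k/2})$ (assuming $C \le 4$, which we may), so counting horoball centres in the smaller ball is dominated by the count in the larger; and by doubling $\mups(B(z,\tau^{k/2})) \approx \mups(B(z,(2/\sqrt C)\tau^{k/2}))$, giving the correct normalisation. For the lower bound direction one wants the count inside $B(z,\tau^{k/2})$ to be $\gtrsim \tau^{-k\delta}\mups(B(z,\tau^{k/2}))$; here I would re-run Theorem \ref{localvelani} but centred at a suitable point, or, more simply, apply the theorem at radius $R' = \tau^{k/2}/2$ say — one checks $\tau^k < C(R')^2$ still holds after possibly shrinking $C$ once more (this is why the lemma says ``fix $C$ as in Theorem \ref{localvelani}'' but allows re-choosing $\tau$). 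Then $B(z,R') \subseteq B(z,\tau^{k/2})$ handles the inclusion and doubling handles the measure.

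For the window change: $[2\tau^{k+1}, \tau^k/2)$ is contained in the union of $O(1)$ consecutive windows $[\tau^{j+1},\tau^j)$ (for instance the two or three windows with $j \in \{k-1, k\}$, depending on how $2\tau$ and $1/2$ compare to powers of $\tau$), so the count over the modified window is, up to a bounded factor, a finite sum of counts of the form in Theorem \ref{localvelani}, each of which is $\approx_\tau \tau^{-j\delta}\mups(B(z,\cdot)) \approx_\tau \tau^{-k\delta}\mups(B(z,\cdot))$ since $\tau^{-j\delta} \approx_\tau \tau^{-k\delta}$ for $|j-k| = O(1)$. This gives the upper bound on the modified count directly. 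For the lower bound, I would instead note that $[2\tau^{k+1},\tau^k/2)$ \emph{contains} a full window $[\tau^{j+1},\tau^j)$ for a suitable $j$ close to $k$ once $\tau$ is small enough (e.g. $j = k$ works provided $2\tau \le \tau$, which is false — so rather $[\tau^{k+1},\tau^k) \supseteq [\tau^{k+1}, \tau^k/2)$ is the wrong direction; instead take $j=k$ and use that $[\tau^{k+1},\tau^k)$ and $[2\tau^{k+1},\tau^k/2)$ overlap on $[2\tau^{k+1},\tau^k/2)$... ). More carefully: for $\tau$ small, $[\tau^{k+1}, \tau^{k}/2) \subseteq [2\tau^{k+1}, \tau^k/2)$ precisely when $\tau^{k+1} \ge 2\tau^{k+1}$, which fails; so one should choose the containing standard window to be $[\tau^{k}, \tau^{k-1})$ intersected appropriately, or simply observe $[2\tau^{k+1},\tau^k/2) \supseteq [\tau^{k}, \tau^{k}/2) \cup \dots$. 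The cleanest route is: the modified window contains $[\tau^{m+1}, \tau^m)$ for $m = k-1$ provided $\tau^m < \tau^k/2$, i.e. $\tau^{k-1} < \tau^k/2$, i.e. $2 < \tau$, false again. I will instead simply split $[2\tau^{k+1}, \tau^k/2)$ into the portion lying in $[\tau^{k+1},\tau^k)$, apply Theorem \ref{localvelani}-type reasoning to \emph{sub-windows} (the proof of Theorem \ref{localvelani} in Section \ref{prooflocalvelani} in fact produces counts on windows $[r, \alpha r)$ for a range of $\alpha$, not merely $\alpha = 1/\tau$), and pick the sub-window or union of sub-windows exactly equal to $[2\tau^{k+1}, \tau^k/2)$; since that interval has multiplicative length $\tau^k/2 \div 2\tau^{k+1} = 1/(4\tau) > 1$ for small $\tau$, the argument in Section \ref{prooflocalvelani} applies verbatim with this multiplicative length in place of $1/\tau$, yielding the lower bound $\gtrsim_\tau \tau^{-k\delta}\mups(B(z,\tau^{k/2}))$.

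The main obstacle is purely bookkeeping: making sure that after the various radius rescalings (by factors $2/\sqrt C$, $1/\sqrt C$, $1/2$) and window rescalings (replacing $[\tau^{k+1},\tau^k)$ by $[2\tau^{k+1},\tau^k/2)$) the hypothesis ``$\tau^k < CR^2$'' of Theorem \ref{localvelani} remains satisfiable with a single choice of $C$ and a correspondingly small $\tau$, and that all the $\approx_\tau$ implicit constants absorb the $O(1)$-many window shifts and the doubling constants of $\mups$. Once one is careful that the multiplicative width $1/(4\tau)$ of the target window exceeds $1$ (which forces ``$\tau$ sufficiently small''), the proof reduces to invoking Theorem \ref{localvelani}, the doubling property of $\mups$, and monotonicity of the count under ball inclusion. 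I would therefore present it as: (1) apply Theorem \ref{localvelani} at radius $(2/\sqrt C)\tau^{k/2}$; (2) use doubling to replace the normalising measure by $\mups(B(z,\tau^{k/2}))$; (3) for the upper bound, enlarge the window and the ball; (4) for the lower bound, shrink the ball (re-choosing $C$, $\tau$ if needed) and run the Section \ref{prooflocalvelani} argument on the exact window $[2\tau^{k+1},\tau^k/2)$, whose multiplicative width exceeds $1$ for small $\tau$.
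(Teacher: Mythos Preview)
Your upper bound argument is correct and matches the paper: apply Theorem \ref{localvelani} with $R=(2/\sqrt{C})\tau^{k/2}$ (so $\tau^k = (C/4)R^2 < CR^2$), then enlarge the size window and use doubling to replace $\mups(B(z,R))$ by $\mups(B(z,\tau^{k/2}))$.

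However, your item (ii) is based on a misreading of the lemma. The ball on the \emph{left-hand side} of the lemma is $B(z,(2/\sqrt{C})\tau^{k/2})$, exactly the ball coming out of Theorem \ref{localvelani} with the above $R$; only the normalising measure on the right-hand side uses the smaller radius $\tau^{k/2}$, and that discrepancy is handled in one line by doubling. Your paragraph trying to shrink the counting ball to radius $\tau^{k/2}$ is therefore unnecessary (and its talk of ``re-choosing $C$'' conflicts with the hypothesis ``Fix $C$ as in Theorem \ref{localvelani}'').

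For the lower bound your eventual suggestion---open up Section \ref{prooflocalvelani} and rerun the argument with the window $[r,\alpha r)=[2\tau^{k+1},\tau^k/2)$, noting that its multiplicative width $\alpha=1/(4\tau)$ can be made large by taking $\tau$ small---does work, once one checks that $r<CR^2$ and that $\alpha$ is large enough for the inequality $\alpha^{-\kmax/2}(1/C_3-C_3\alpha^{\kmax/2-\delta})\geq C_4>0$. But this is not what the paper does, and the paper's route is cleaner: it applies Theorem \ref{localvelani} as a black box with a \emph{finer} parameter. Take $\tau_0\leq 1/2$ admissible for Theorem \ref{localvelani} and set $\tau=\tau_0^3$. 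Applying Theorem \ref{localvelani} (with parameter $\tau_0$) at index $3k+1$ and radius $(2/\sqrt{C})\tau_0^{3k/2}=(2/\sqrt{C})\tau^{k/2}$ gives a count $\approx_\tau \tau^{-k\delta}\mups(B(z,\tau^{k/2}))$ on the $\tau_0$-window $[\tau_0^{3k+2},\tau_0^{3k+1})=[\tau^{k+2/3},\tau^{k+1/3})$, and since $\tau\leq 1/8$ one has $2\tau^{k+1}\leq \tau^{k+2/3}$ and $\tau^{k+1/3}\leq \tau^k/2$, so this window sits inside the target $[2\tau^{k+1},\tau^k/2)$. This avoids reopening the proof of Theorem \ref{localvelani} and explains transparently why ``sufficiently small $\tau$'' appears: it is precisely $\tau=\tau_0^3$ for an admissible $\tau_0\leq 1/2$. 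Your meandering through candidate integer windows $[\tau^{j+1},\tau^j)$ was on the right track---you were looking for a standard window inside the target---but the trick is that no \emph{integer} shift of a $\tau$-window fits; one must pass to a cube root.
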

\begin{proof}
The $\lesssim_\tau$ direction of the lemma is immediate from  Theorem \ref{localvelani}. For the $\gtrsim_\tau$ direction, let $\tau_0$ be a suitable value of $\tau$ from the statement of  Theorem \ref{localvelani} such that $\tau_0 \leq 1/2$, and choose $\tau = \tau_0^3$. Fix $z \in \lset$. Note that the ball $B(z,(2/\sqrt{C})\tau_0^{(k-1)/2})$ is large enough to directly apply Theorem \ref{localvelani} to obtain 
\[\# \left\{p \in P \cap B(z,(2/\sqrt{C})\tau_0^{(k-1)/2}) \mid \tau_0^{k+1} \leq \size{H_p} < \tau_0^k \right\} \approx_\tau \tau_0^{-k\delta} \mups(B(z,\tau_0^{k/2}))\]
for  sufficiently large $k \in \mathbb{N}$. Replacing $k$ by $3k+1$ and using $\tau = \tau_0^3$, this further gives
\[\# \left\{p \in P \cap B(z,(2/\sqrt{C})\tau^{k/2}) \mid \tau^{k+2/3} \leq \size{H_p} < \tau^{k+1/3} \right\} \approx_\tau \tau^{-k\delta} \mups(B(z,\tau^{k/2})).\]
for  sufficiently large $k \in \mathbb{N}$.  This proves the  $\gtrsim_\tau$ direction of the lemma since 
\[
2\tau^{k+1}  \leq \tau^{k+2/3} \leq  \tau^{k+1/3} \leq \tau^k/2,
\]
recalling that $\tau = \tau_0^3 \leq 1/8$.
\end{proof}
An equivalent  definition of geometric finiteness due to Bowditch \cite[Definition (GF2)]{bowditch} guarantees the existence of $\lambda>0$ such that $\iota(\lset) \subseteq V_\lambda \cup \{ \infty\}$ where $V_\lambda$ is the Euclidean $\lambda$-neighbourhood of the span of $\{f_1 , \dots ,f_{\mathbf{k}(\mathbf{0})} \}$.  Fix $\tau$ as in Lemma \ref{tauball}.  Observe that $\iota^{-1}(V_\lambda)$ is the complement of two spheres tangent at $\mathbf{0}$ both of diameter $ \approx 1$.  Therefore,  noting that $\tau^k \leq |z|^2 \leq R < \tau^{k/2}$ by assumption,  $\iota^{-1}(V_\lambda)\cap B(z,\tau^{k/2}) $ is contained in a $\lesssim (|z| + \tau^{k/2})^2 \lesssim R$ neighbourhood of a $\mathbf{k}(\mathbf{0})$-dimensional plane, and so
\[
N_R\left(\lset \cap B(z,(2/\sqrt{C})\tau^{k/2}) \right) \lesssim N_R\left(\iota^{-1}(V_\lambda) \cap B(z,\tau^{k/2}) \right)  \lesssim \left(\frac{\tau^{k/2}}{R}\right)^{\mathbf{k}(\mathbf{0})}.
\] 
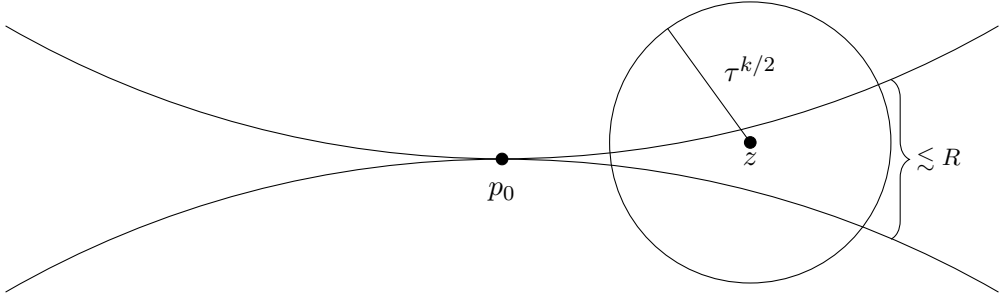
\begin{figure}[H]
\centering
\begin{tikzpicture}[scale=1.1]
\draw (0,1.61) arc (240:300:12cm);
\draw (0,-1.61) arc (120:60:12cm);
\node at (6,-0.4) {$p_0$};
\draw (9,0.2) -- (8,1.58);
\node at (9,1.1) {$\tau^{k/2}$};
\filldraw (9,0.2) circle (2pt) node[below] {$z$};
\draw (9,0.2) circle (1.7cm);
\filldraw (6,0) circle (2pt);
\draw [decorate,decoration={brace,amplitude=7pt},xshift=0pt,yshift=0pt]
(10.7,0.96) -- (10.7,-0.97)node [black,midway,xshift=18pt] {\footnotesize
$\lesssim R$};
\end{tikzpicture}
    \caption{An illustration showing how $\iota^{-1}(V_\lambda) \cap B(z,\tau^{k/2})$ is squeezed between two spheres tangent at $p_0$ (in this picture we have $\mathbf{k}(p_0)=1$).}
\end{figure}

Using this estimate and Lemma \ref{tauball}, apply the pigeonhole principle to find $x \in  B(z,(2/\sqrt{C})\tau^{k/2})$ such that
\[\# \left\{p \in P \cap B(x,R/3) \mid 2\tau^{k+1} \leq \size{H_p} < \tau^k/2 \right\} \gtrsim_\tau \tau^{-k\delta} \mups(B(z,\tau^{k/2})) \left(\frac{R}{\tau^{k/2}}\right)^{\mathbf{k}(\mathbf{0})}.\]
The idea  is to `pull' the horoballs in the above expression into our target set $B(z,R)$ using ($\iota$ conjugates of) $\{f_1 , \dots ,f_{\mathbf{k}(\mathbf{0})} \}$.  
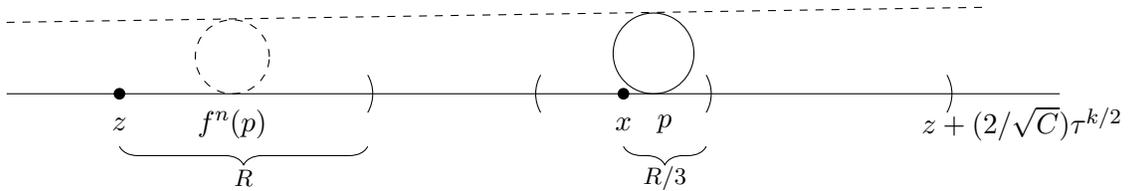
\begin{figure}[H]
    \centering
    \label{drag}
\begin{tikzpicture}
\draw (0,0) -- (14,0);
\draw (12.5,-0.25) arc (-30:30:0.5cm);
\filldraw (1.5,0) circle (2pt);
\filldraw (8.2,0) circle (2pt);
\node at (1.5,-0.4) {$z$};
\node at (8.75,-0.4) {$p$};
\node at (3,-0.4) {$f^n(p)$};
\node at (8.2,-0.4) {$x$};
\node at (13.5,-0.4) {$z+(2/\sqrt{C})\tau^{k/2}$};
\draw (9.3,-0.25) arc (-30:30:0.5cm);
\draw (7.1,-0.25) arc (210:150:0.5cm);
\draw (4.8,-0.25) arc (-30:30:0.5cm);
\draw (8.6, 0.537) circle (0.537cm);
\draw[dashed] (3, 0.492) circle (0.492cm);
\draw[dashed] (0,0.95) -- (13,1.15);
\draw [decorate,decoration={brace,mirror,amplitude=7pt},xshift=0pt,yshift=0pt]
(1.5,-0.7) -- (4.8,-0.7)node [black,midway,yshift=-12pt] {\footnotesize
$R$};
\draw [decorate,decoration={brace,mirror,amplitude=7pt},xshift=0pt,yshift=0pt]
(8.2,-0.7) -- (9.3,-0.7)node [black,midway,yshift=-12pt] {\footnotesize
$R/3$};
\end{tikzpicture}
    \caption{An illustration in the case where $d=1$ showing how the horoballs in the set $B(x,R/3)$ are moved into the set $B(z,R)$ by using a parabolic map $f$ fixing 0. The dashed arc represents the circle that the horoball $H_p$ is dragged along by repeated applications of $f$.}
    
\end{figure}

Let $\Gamma_0 = \langle f_1 , \dots ,f_{\mathbf{k}(\mathbf{0})} \rangle$ and note that  $\Gamma_0(\iota(x))$ is a lattice with `separation' 
\begin{equation} \label{separation}
\max_i |t_i| \approx |H_{\mathbf{0}}|^{-1}.
\end{equation}
 Since 
\[
\frac{|z|}{(2/\sqrt{C})\tau^{k/2}} \geq \frac{c_1 \tau^{k/2}}{(2/\sqrt{C})\tau^{k/2}}  = \frac{c_1\sqrt{C}}{2} 
\]
and
\[
\frac{|z|}{R} \geq \frac{c_1 \tau^{k/2}}{\tau^{k/2}} = c_1
\]
we can choose $c_1=c_1(\tau)$ sufficiently large to ensure that 
\begin{equation} \label{jacobian}
\frac{0.999}{|z|^2} \leq  \frac{1}{|y|^2}   \leq \frac{1.001}{|z|^2}
\end{equation}
for all $y \in B(z,(2/\sqrt{C})\tau^{k/2}+R) $. Therefore, we can choose $c_2$ sufficiently small to ensure
\begin{equation} \label{choose1}
|\iota(B(z,R))| \geq 2 |\iota(B(x,R/3))| \geq  \frac{R}{|z|^2} \geq  \frac{1}{c_2^2\size{H_{\mathbf{0}}}} \geq 100 \max\{\lambda,  \max_i |t_i|\}.
\end{equation}
Note that $c_2$ does not depend on $\size{H_{\mathbf{0}}}$ but does depend on the implicit constants in \eqref{separation}.  To deduce \eqref{choose1} we use that  $\iota$ is conformal and its  Jacobian derivative at $y \neq \mathbf{0}$ is $  1/|y|^2$  multiplied by an orthogonal matrix. Using \eqref{choose1} we immediately find $f \in \Gamma_0$ such that
\[
f(\iota(P \cap B(x,R/3))) \subseteq \iota(B(z,R))
\]
and therefore
\[
(\iota^{-1}f\iota)(P \cap B(x,R/3))) \subseteq B(z,R).
\]
Here the fact that $|\iota(B(z,R))|  \geq 100  \lambda$ was used to ensure that the orbit $\Gamma_0(\iota(x))$  cannot miss the target $\iota(B(z,R))$ as it passes noting that  $\Gamma_0(\iota(x))$ is contained in a $\mathbf{k}(\mathbf{0})$-dimensional plane which is itself a subset of $V_\lambda$.

For the above choice of  $f$, we also get
\begin{equation} \label{choose2}
1/2 \leq \frac{|\iota^{-1}f \iota(H_p)|}{ \size{H_p}} \leq 2.
\end{equation}
 To deduce  \eqref{choose2} we again use conformality of $\iota$ and \eqref{jacobian}.    Using \eqref{choose2}, for all $p \in P \cap B(x,R)$ satisfying $ 2\tau^{k+1} \leq \size{H_p} < \tau^k /2$, we get
\[
\tau^{k+1} \leq |\iota^{-1}f \iota(H_p)| < \tau^k.
\]
This completes the proof.  Finally, note that if   $\delta=\kmin=\kmax$, then $\mups$ is $\delta$-Ahlfors-David regular and the lower bound provided above agrees with the upper bound from Theorem \ref{theoretical} up to constants.

\subsection{Proof of Theorem \ref{proximity}} \label{proofproximity}
Clearly we can have $p_0 \in B(z,R)$ and $|H_{p_0}| \geq cR^{\lambda}$, so it remains to prove that if such $p_0$ is present, then it is unique. If $p_0$ is not unique, then we must be able to  associate an axes oriented right-angled triangle with   hypotenuse  of length $|H_{p_0}|/2$,   horizontal  side of length $|z-p_0|+R$ and opposite side strictly smaller than $|H_{p_0}|/2-cR^\lambda$ which has    vertices at the centre of $H_{p_0}$, the boundary of $H_{p_0}$, and (the vertex associated with the right-angle) above $p_0$ at `height' $> cR^\lambda$, see Figure \ref{pythagfig}.  Otherwise, using disjointness of horoballs, no other horoballs of size $\geq cR^\lambda$ can be tangent to a point in $B(z,R)$.
\begin{figure}[H]  
\centering
\begin{tikzpicture}[scale=1.05]
\draw (0,0) -- (10,0);
\draw (5,3) -- (7,0.77);
\draw (7,0.77) -- (5,0.77);
\draw (5,0.77) -- (5,3);
\draw[dashed] (7,0.77) -- (7,0);
\draw[dashed] (5,0.77) -- (5,0) node[below] {$p_0$};
\draw (5,3) circle (3cm);
\coordinate (A) at (5.2,0.77) {};
\coordinate (B) at (5.2,0.97) {};
\coordinate (C) at (5,0.97) {};
\draw (A) -- (B);
\draw (B) -- (C);
\draw [decorate,decoration={brace,amplitude=7pt},xshift=0pt,yshift=0pt]
(7,-0.5) -- (5,-0.5)node [black,midway,yshift=-12pt] {\footnotesize
$|z-p_0|+R$};
\draw [decorate,decoration={brace,amplitude=7pt},xshift=5pt,yshift=0pt]
(4.7,0.77) -- (4.7,3)node [black,midway,xshift=-42pt] {\footnotesize
$<|H_{p_0}|/2-cR^{\lambda}$};
\end{tikzpicture}
\caption{A picture of the associated  right-angled triangle.}

\label{pythagfig}
\end{figure}
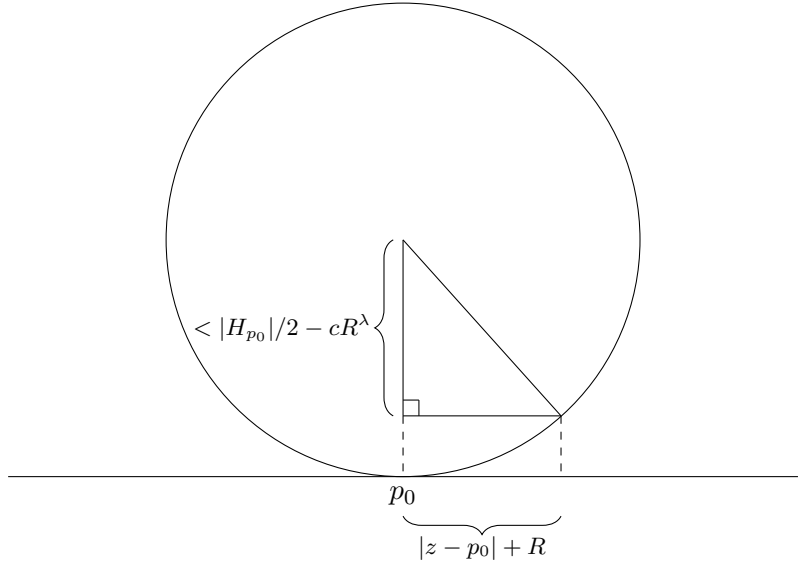

However, for such a triangle Pythagoras' Theorem ensures
\[
(|H_{p_0}|/2)^2 < (|H_{p_0}|/2-cR^\lambda)^2+(|z-p_0|+R)^2
\]
which in turn gives
\[
c|H_{p_0}|R^\lambda < c^2R^{2\lambda}+(|z-p_0|+R)^2
\]
contradicting  the assumption in the theorem.

\subsection{Proof of Theorem \ref{dimensions}} \label{proofdimensions}

We first estimate  $\ubox \mu_s$ from above.  By Theorem \ref{Global2} and  \eqref{GlobThm}, for all   $z \in \lset$ and $R>0$ sufficiently small,  
 \begin{align*}
\mu_s(B(z,R))  &\gtrsim   R^{2(s-\delta)} \mups(B(z,R)) \gtrsim R^{2(s-\delta)} R^{\max\{\delta,2\delta-\kmin\}} = R^{\max\{2s-\delta, 2s-\kmin\}} 
\end{align*}
which proves $\ubox \mu_s \leq \max\{2s-\delta, 2s-\kmin\}$.

We now estimate $\lbox \mu_s$ from below, which we do in two different ways to account for the maximum.  Let $p \in P$ be such that $\mathbf{k}(p) = \kmin$, and let $z_n \in \lset$ be such that $z_n \to p$ with $|z_n-p|$ strictly decreasing and $ |z_n-p|/|z_{n+1}-p| \to 1$. Such a sequence exists taking $z_n = f^n(z_0)$ for some $z_0 \neq p$ and $f$ a  parabolic map fixing $p$.  Choose a sequence of $R_n \to 0$ such that 
\[R_n < \size{z_n-p} < \frac{101R_n}{100}.\]
Note this means that for $n$ large enough, we have $\mathbf{k}(z_n, -\log R_n) = \kmin$ and $\rho(z_n, -\log R_n) \geq -\log R_n - C$ for some constant $C>0$, and so applying (\ref{GlobThm}) gives $\mups(B(z_n, R_n)) \lesssim R_n^{2\delta-\kmin}$. Furthermore, for $n$ large enough, there can only be finitely many parabolic points $p' \in B(z_n, R_n)$ satisfying   $  \size{H_{p'}}> 10 R_n^2 $, and they all must satisfy $\size{H_{p'}} \lesssim R_n^2$ by Theorem \ref{proximity}.  For a given $R \in (0, R_1)$, let $n$ be such that   $R_{n+1} \leq R < R_n$. Applying Theorem \ref{Global2} gives, for sufficiently large $n$,
\[\mu_s(B(z_n,R )) \leq \mu_s(B(z_n,R_n)) \lesssim R_n^{2(s-\delta)} R_n^{ 2\delta-\kmin} + R_n^{2s} = R_n^{2s-\kmin} \lesssim R^{2s-\kmin}   \]
which proves $\lbox \mu_s \geq 2s-\kmin$.

To derive the other lower bound, choose a sequence $p_n \in P$ such that $ \tau^{n+1} \leq \size{H_{p_n}}< \tau^n$ for some $\tau\in (0,1)$. Such a sequence (and $\tau$) exists by Theorem \ref{localvelani}.  For each $n \in \mathbb{N}$,   let $R_n>0$ denote the smallest real number such that $\rho(p_n, -\log R_n) = 0$ (note that such a number must exist as $p_n$ is parabolic) and therefore  $\size{H_{p_n}} \approx R_n$.   For a given $R \in (0, R_1)$, let $n$ be defined uniquely such that   $R_{n+1} \leq R < R_n$.  Then, by Theorem \ref{Global2},
\[\mu_s(B(p_n,  R)) \leq  \mu_s(B(p_n,  R_n)) \lesssim   R_n^{2s-\delta} + R_n^{s} + |H_{p_n}|^s \lesssim R_n^s \lesssim R^s\]
which proves $\lbox \mu_s \geq s$, as required.

As for $\asospec \mu_s$, the upper bound (for all $\theta \in (0,1)$) is a consequence of a general upper bound in terms of the upper box dimension proved in \cite[Proposition 4.1]{minkowski}.  For the lower bound, we first prove an estimate which holds for all $\theta \in (0,1)$.  Choose $p \in P$ such that $\mathbf{k}(p) = \kmin$ and $\size{H_p} \approx 1$ and choose $z_n \in \lset$ such that $z_n \to p$, and $R_n \to 0$ satisfying
\[R_n < \size{z_n-p} < \min\left\{R_n^\theta, \frac{101R_n}{100}\right\}.\]
Then, for sufficiently large $n$, 
\[\frac{\mu_s(B(z_n,R_n^\theta))}{\mu_s(B(z_n,R_n))} \gtrsim \frac{1}{R_n^{2s-\kmin}} = \left(\frac{R_n^\theta}{R_n}\right)^{(2s-\kmin)/(1-\theta)}\]
which gives $\asospec \mu_s \geq (2s-\kmin)/(1-\theta)$, as required.   This also proves $\qaso \mu_s = \infty$.

Next we prove a lower bound which will take on a different form depending on whether $\theta \leq 1/2$ or $\theta>1/2$.  Let $p, p'$ be distinct parabolic fixed points with $|H_p| \approx |H_{p'}| \approx 1$ and let $f$ be a parabolic element fixing $p'$. Let $p_n = f^n(p)$.  For large integers $n$  it is readily seen, e.g. \cite[Lemma 4.3]{fraserstuart}, that 
\[
|H_{p_n}|  =  |H_{f^n(p)}| = |f^n(H_{p})|  \approx |f^n(p) - p'|^2 \to 0
\]
  as $n \to 0$ with implicit constants depending on $f$ and $p$. First suppose $\theta < 1/2$ with the $\theta=1/2$ case following by continuity of the Assouad spectrum, see \cite{fraserbook}.  Choose $R_n = |H_{p_n}|$.  By Theorem \ref{Global2}, 
\[\mu_s(B(p_n, R_n)) \lesssim   R_n^{2s-\delta} + R_n^{s} + |H_{p_n}|^s \lesssim R_n^s.\]
Moreover, 
\[
|p_n - p'| =   |f^n(p) - p'| \lesssim   \sqrt{R_n} 
\]
Therefore,  since $\theta < 1/2$, for large enough $n$, $|p_n - p'| <  R_n^\theta$ and therefore
\[\frac{\mu_s(B(p_n, R_n^\theta))}{\mu_s(B(p_n, R_n))} \gtrsim \frac{|H_{p'}|^s}{R_n^{s}} \gtrsim \left(\frac{R_n^\theta}{R_n}\right)^{s/(1-\theta)}\]
which gives $\asospec \mu_s \geq s/(1-\theta)$, as required. Now suppose  $\theta>1/2$. We may assume $\delta<\kmin$ since otherwise the claimed bound does not improve on the previously established lower bound.   Choose   $R_n$ such that
\[
R_n^\theta = |p_n - p'|  \approx \sqrt{ |H_{p_n}| }.
\]
For large enough $n$, all horoballs tangent to a point in $B(p_n,R_n)$ have diameter $\lesssim  |H_{p_n}| \approx R_n^{2\theta}$.    Moreover, note that
\[
\exp(-\rho(p_n, -\log R_n) ) \gtrsim R_n^{2\theta-1}
\]
since $\rho(p_n, -\log R_n)$ can be bounded naively above by
\[
\log \left( \frac{R_n}{|H_{p_n}|}\right) \leq \log (R_n^{1-2\theta})
\]
up to an additive constant.  

\begin{figure}[H]
\centering
\begin{tikzpicture}
\draw (0,0) -- (10,0) node[right] {$\mathbb{S}^d$};
\draw (5,1.5) circle (1.5cm);
\draw (5,6) -- (5,0) node[below] {$p_n$};
\draw (0.8,6) arc (232:257:23cm);
\node at (8,3.5) {$H_{p'}$};
\filldraw (5,5) circle (2pt) node[right] {$(p_n)_{-\log R_n}$};
\filldraw (5,3) circle (2pt);
\draw [decorate,decoration={brace,amplitude=12pt},xshift=0pt,yshift=0pt]
(5,3) -- (5,5);
\node at (5.7,1.5) {$H_{p_n}$};
\draw (4.8,0) -- (4.8,0.2);
\draw (4.8,0.2) -- (5,0.2);
\end{tikzpicture}
\caption{Estimating $\rho(p_n, -\log R_n)$ from above by the hyperbolic distance between 
$(p_n)_{-\log R_n}$ and the `tip' of $H_{p_n}$.}
\end{figure}
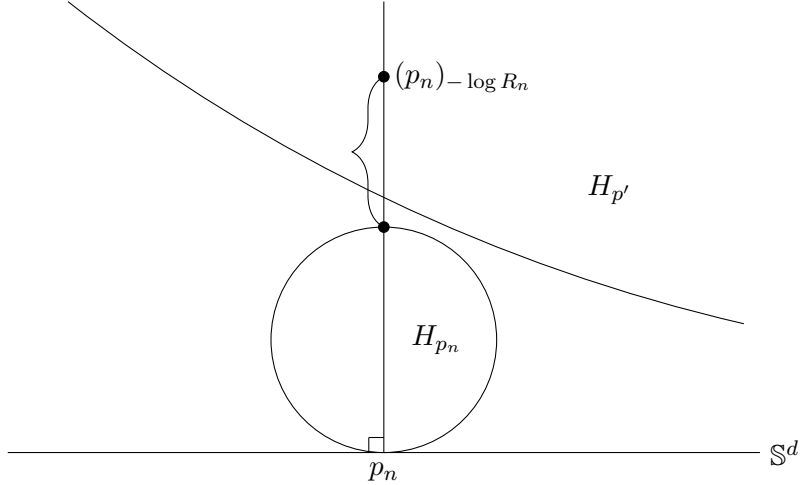

Therefore, by Theorem \ref{Global2} and \eqref{GlobThm} and using that $\delta<\kmin$,
\[\mu_s(B(p_n, R_n)) \lesssim   R_n^{2\theta(s-\delta)} \mu_\delta(B(p_n, R_n)) + |H_{p_n}|^s \lesssim  R_n^{2\theta(s-\delta)}R_n^{\delta+(2\theta-1)(\delta-\kmin)}.\]
 Therefore, for large enough $n$,
\[\frac{\mu_s(B(p_n, R_n^\theta))}{\mu_s(B(p_n, R_n))} \gtrsim \frac{|H_{p'}|^s}{R_n^{2\theta(s-\delta)+\delta+(2\theta-1)(\delta-\kmin)}} \gtrsim \left(\frac{R_n^\theta}{R_n}\right)^{(2\theta (s-\kmin)+\kmin)/(1-\theta)}\]
which gives $\asospec \mu_s \geq (2\theta (s-\kmin)+\kmin)/(1-\theta)$, as required.

\section*{Acknowledgements}
JMF was  supported by an \emph{EPSRC Standard Grant} (EP/R015104/1),  a \emph{Leverhulme Trust Research Project Grant} (RPG-2019-034), and an \emph{RSE Sabbatical Research Grant} (70249). LS was supported by the University of St Andrews.

\bibliographystyle{apalike}
\addcontentsline{toc}{section}{References}
\bibliography{ref}
\end{document}